\crefname{section}{Section}{Sections}
\crefname{subsection}{Subsection}{Subsections}
\crefname{appendix}{Appendix}{Appendix}
\crefname{figure}{Figure}{Figures}
\crefname{table}{Table}{Tables}
\crefname{property}{Property}{Properties}
\crefname{theorem}{Theorem}{Theorem}
\newtheorem{theorem}{Theorem}
\newtheorem{proposition}{Proposition}
\theoremstyle{remark} \newtheorem{remark}{Remark}}
\newcommand\pd[2]{\dfrac{\partial {#1}}{\partial {#2}}}
\newcommand\bA{{\bf A}}
\newcommand\bbA{\bar{\bf A}}
\newcommand\bbI{\bar{{\bf I}}}
\newcommand\bB{{\bf B}}
\newcommand\mE{\mathcal{E}}
\newcommand\bx{\boldsymbol{x}}
\newcommand\bn{\boldsymbol{n}}
\newcommand\bc{\boldsymbol{c}}
\newcommand{\imag}{\mathrm{i}}
\newcommand\bbN{\mathbb{N}}
\newcommand\bbZ{\mathbb{Z}}
\newcommand\bbS{\mathbb{S}}
\newcommand\dd{\,\mathrm{d}}
\newcommand\mF{\mathcal{F}}
\newcommand\mG{\mathcal{G}}
\newcommand\mS{\mathcal{S}}
\def\bx{\boldsymbol{x}}
\def\bn{\boldsymbol{n}}
\def\bsOmega{\boldsymbol{\Omega}}
\numberwithin{equation}{section}
\definecolor{electricpurple}{rgb}{0.75,0.0,1.0}
\definecolor{darkred}{rgb}{0.65,0,0}
\definecolor{green}{rgb}{0.0, 0.5, 0.0}
 \newcommand\deletei{\bgroup\markoverwith{\textcolor{darkred}{\rule[0.5ex]{1pt}{1pt}}}\ULon}
 \newcommand\deleteii{\bgroup\markoverwith{\textcolor{blue}{\rule[0.5ex]{2pt}{2pt}}}\ULon}
\title {An asymptotic-preserving IMEX method for
  nonlinear radiative transfer equation}
\author{Jinxue
  Fu\thanks{Beijing Computational Science Research Center, Beijing,
    China, 100193, email: {\tt jinxue.fu@csrc.ac.cn}},
    ~~Weiming Li\thanks{Institute of Applied Physics and Computational Mathematics,
    Beijing, China, 100094, email: \tt{liweiming@pku.edu.cn}},~~Peng
  Song\thanks{Laboratory of Computational Physics, Institute of
    Applied Physics and Computational Mathematics, Beijing, China,
    100088, email: \tt{song\_peng@iapcm.ac.cn}}, ~~Yanli
  Wang\thanks{Beijing Computational Science Research Center, Beijing,
    China, 100193, email: {\tt ylwang@csrc.ac.cn}.} }
\begin{document}
\maketitle

\begin{abstract}
  We present an asymptotic preserving method for the radiative
  transfer equations in the framework of $P_N$ method.  An implicit
  and explicit numerical scheme is proposed to solve the $P_N$ system
  based on the order analysis of the expansion coefficients of the
  specific intensity, where the order of each expansion coefficient is
  derived by the Chapman-Enskog method. The coefficients at
  higher-order are treated explicitly while those at lower-order are
  treated implicitly in each equation of the $P_N$ system.  Energy
  inequality is proved for this numerical scheme. Several numerical
  examples validate the efficiency of this scheme in both optically
  thick and thin regions.

  \vspace*{4mm}
  \noindent {\bf key word:} radiative transfer equation; asymptotic
  preserving; energy stability

\end{abstract}

\section{Introduction}
Radiation plays an important role in thermal radiative transfer in
inertial confinement fusion. Thermal radiative transfer is an
intrinsic component of coupled radiation-hydrodynamic problems
\cite{semi2008Ryan}, and the radiative transfer equations (RTE) are
adopted to describe the energy exchange between different materials in
the system. However, this system is of high dimensionality. Generally,
there are all together seven independent variables in the system, such
as the position in the physical space, angle in the phase space, frequency
and time, which will lead to high computational cost
\cite{implicit2016Laboure}. At the same time, the radiation travels at
the speed of light, which imposes a quite restrictive limit on the time-step
size. Solving the RTE system numerically is a challenging problem
\cite{sun2015asymptotic1}.

Generally speaking, there are two kinds of methods to solve this
system, the stochastic and deterministic methods. One of the popular
stochastic methods is the implicit Monte Carlo (IMC) method
\cite{fleck1971}, which is efficient in optically thin regions, but
needs quite a large amount of particles in the optically thick
regions, making it quite expensive \cite{fleck1971}. Moreover, though
there is no ray effect in the stochastic method, it suffers from the
statistical noise which will also make this method inefficient
\cite{fleck1971}.  Several efforts have been made to improve the
efficiency of IMC method, such as \cite{Gentile2001, dens2011,
  dens2015, Park2012}, which we will not discuss in detail
here. Recently, a series of unified gas-kinetic schemes (unified
gas-kinetic particle method (UGKP) \cite{shi2020} and unified
gas-kinetic wave particle method (UGKWP) \cite{liweiming2020}) are
proposed to solve RTE system, where a particle-based Monte Carlo
solver is proposed to track the non-equilibrium transport.  For the
deterministic methods, the discrete-ordinates ($S_N$) method is often
adopted \cite{koch1995, Lath1965}. In this method, the transport
equation is solved along particular directions and the energy density
is reconstructed using a quadrature rule. $S_N$ method has been
studied for many years and several efforts have been made to improve
the efficiency of this method \cite{Warsa2004}. However, $S_N$ methods
suffer from ray-effects \cite{Morel2003, Math1999}, which will lead to
the phenomenon of hot spot in the simulation.

Another deterministic method is the spherical harmonics ($P_N$) method
\cite{Kershaw1976, Lewis1993}. In the framework of $P_N$ method, the
specific intensity of radiation is approximated by a series expansion
of polynomials in the angular space. $P_N$ method, which is one of the
spectral methods, may have high approximate efficiency, and also
preserves the property of rational invariance. However, for the cases
that the interactions with the material are rare, $P_N$ method may
lead to non-physical oscillations or even negative energy density
solution \cite{McCl2010}. Several attempts are made to correct the
negativity in $P_N$ equations, such as adding artificial scattering
terms \cite{Olson2009} or adding filter which is also known as the
filtered $P_N$ method \cite{mcclarren2010robust, implicit2016Laboure}.
Besides, for both $P_N$ and $S_N$ methods, due to the fact that
photons transport at an extremely fast speed, we usually have to treat
the transport term implicitly when employing time
discretization. Moreover, in the optically thick regime, the photon's
mean free path is quite small. Thus, the spatial mesh size, which
should be comparable to the photon's mean free path, is also very
small and will lead to very expensive computational cost
\cite{sun2015asymptotic1}.

The asymptotic preserving (AP) scheme for the kinetic equation solves
this problem by capturing the asymptotic limit of the kinetic equation
on the discrete level without the need to resolve small scales
\cite{Jin1991, Jin1993, Jin2000}. A scheme is called an AP scheme if
its asymptotic limit as the mean free path goes to zero with the time
step and mesh size fixed becomes a consistent and stable
discretization of the limit macroscopic equation (for the radiative
transfer equation, the limiting equation is a diffusion equation)
\cite{Klar1998, Larsen1987, Larsen1989, Mie2013}.  In the simulation
of the steady neutron transport problems, where the AP schemes were
first studied, some work has been done, such as those by Larsen,
etc. \cite{Larsen1987, Larsen1989} and Jin, etc. \cite{Jin1991,
  Jin1993}. Then, the AP schemes were later applied to the unsteady
problems, where several kinds of AP schemes were developed. In
\cite{Lemou2010}, the micro-macro decomposition is utilized to split
the distribution function, and the implicit-explicit (IMEX) scheme is
applied for the time discretization, where the discontinuous Galerkin
discretization is adopted in the spatial space \cite{xiong2020,
  xiong2015, Peng2020}, and the finite difference discretization is
utilized in \cite{Laiu2019}. In \cite{sun2015asymptotic1,
  sun2018asymptotic}, the UGKS method with AP property is developed
for the radiative transfer equations, where a linearized iterative
solver for the temperature is utilized. In \cite{tang2021}, the
three-state update is adopted to capture the correct front propagation
in the diffusion limit. Moreover, the Eulerian method for the
equilibrium part combined with a Monte Carlo solver for the
perturbation was proposed in \cite{crestetto2019}. In
\cite{Hammer2019}, the multiscale high/low order (HOLO) method is
utilized to build the AP scheme \cite{Maginot2016}, where the
higher-order S-stable diagonally implicit RK method with the
linearization of the Planck function is applied. In this paper, we
will develop an AP scheme for the gray approximation to the radiation
transfer equations in the framework of $P_N$ method. The specific
intensity is first approximated by a series expansion of the basis
functions. Then, the Chapman-Enskog expansion is utilized to get the
order of the expansion coefficients with respect to a parameter
$\epsilon$, which is the typical mean free path divided by the
macroscopic length scale, based on which an implicit-explicit scheme
is designed for $P_N$ system. In this scheme, the terms at
higher-order of $\epsilon$ are solved explicitly with those at
lower-order solved implicitly in each equation of the $P_N$ system. In
this case, the implicit-explicit $P_N$ system is changed into a pseudo
implicit system, which could be solved at the computational cost of an
explicit scheme.  Moreover, the energy exchange term is solved
implicitly, which will greatly release the restriction on the time
step length. The equation for the material energy is solved coupled
with $P_N$ system, which is reduced into a fourth degree polynomial
equation.
  
The numerical properties of the new scheme are also studied in this
work, including the stability property and the AP property.  The
stability properties of the numerical scheme are studied by the
Fourier analysis and the energy stability analysis. As to the AP
property, when the parameter $\epsilon$ goes to zero, the resulting
$P_N$ system is reduced into a finite difference scheme for the
material temperature. Numerical examples are tested first to validate
the AP property of this numerical scheme. The classical Marshak wave
problems in 1D spatial space and the lattice problem and the hohlraum
problem in 2D spatial space are tested to verify the efficiency of
this numerical scheme.

The rest of this paper is organized as follows: Section \ref{sec:pre}
will introduce the RTE system and $P_N$ method.  The AP IMEX method is
presented and discussed in detail in Section \ref{sec:method}, with
the AP property and energy stability proved in Section
\ref{sec:proof}.  Several numerical examples will be exhibited in
Section \ref{sec:num}. The conclusion and future work will be stated
in Section \ref{sec:conclusion}. $P_N$ system for the 1D RTE system and
the boundary conditions are discussed in Appendix \ref{app:1D} and
\ref{app:boundary_condition}, respectively. The Fourier analysis of
the numerical scheme and the proof of energy stability are discussed
in Appendix \ref{app:fourier} and \ref{app:energy_proof}, respectively.


\section{Radiative transfer equations and $P_N$ method}
\label{sec:pre}
In the absence of hydrodynamic motion and heat conduction, the
radiative transfer equations (RTE) are composed by a transport
equation of the specific intensity and the associated energy balance
equation. In this section, we will introduce the gray approximation to
the radiative transfer equations and the $P_N$ method, which is one of
the most popular numerical methods to solve RTE.

\subsection{The gray approximation to radiative transfer equations}
The radiative transfer and the energy exchange between radiation and
material are described by the gray approximation to the radiative
transfer equations, which have the form below:
\begin{subequations}
  \label{eq:RTE}
  \begin{align}
    \label{eq:I_rt}
    & \dfrac{\epsilon^2}{c} \pd{I}{t} + \epsilon \bsOmega \cdot \nabla I
    = \sigma \left(\frac{1}{4\pi}acT^4 - I\right), \\
    \label{eq:T}
   &  \epsilon^2 C_{v} \pd{T}{t} \equiv \epsilon^2 \pd{U}{t} =
    \sigma\left(\int_{\bbS^2}  I  \dd \bsOmega - ac T^4\right).
      \end{align}
\end{subequations}
Here $I(\bx, t, \bsOmega)$ is the specific intensity of radiation.
$\bsOmega$ is the angular variable which lies on $\bbS^2$, the surface
of the unit sphere. $\bx = (x, y, z)$ is the spatial variable, and
$\sigma(\bx, T)$ is the opacity. $\epsilon$ is the ratio between the
typical mean free path and the macroscopic length scale
\cite{Mie2013}, which plays a similar role to the Knudsen number in
the rarefied gas dynamics. In \eqref{eq:RTE}, the external source and
scattering terms are omitted.  $T(\bx, t)$ is the material temperature
and $c$ is the speed of light. $a$ is the radiation constant given by
\begin{equation}
  \label{eq:coe_a}
  a =  \frac{8 \pi k^4}{15 h^3 c^3},
\end{equation}
where $h$ is Planck's constant while $k$ is Boltzmann constant.  

The relationship between the material temperature $T(\bx, t)$ and the
material energy density $U(\bx, t)$ is
\begin{equation}
  \label{eq:U_T}
  \pd{U}{T} = C_{v} > 0,
\end{equation}
where $C_{v}(\bx, t)$ is the heat capacity. Integrating
\eqref{eq:I_rt} against $\bsOmega$, and together with \eqref{eq:T}, we
can get the conservation of energy
\begin{equation}
  \label{eq:energy}
  \epsilon^2 C_{v} \pd{T}{t} + \epsilon^2 \pd{E}{t}
  + \epsilon
  \int_{\bbS^2} \bsOmega \cdot \nabla I  \dd
  \bsOmega = 0, 
\end{equation}
where $E$ is the energy density defined as
\begin{equation}
  \label{eq:energy_I_1}
  E = \frac{1}{c}\int_{\bbS^2} I \dd \bsOmega. 
\end{equation}
The total energy  is then defined as
\begin{equation}
  \label{eq:total_energy}
  \mathcal{E} = U + E. 
\end{equation}
When $\epsilon$ goes to zero, the specific intensity $I$ goes to a
Planckian at the local temperature \cite{sun2018asymptotic,
  sun2015asymptotic1}, and the corresponding local temperature
$T^{(0)}$ satisfies the nonlinear diffusion equation
\begin{equation}
  \label{eq:limit}
  \pd{U(T^{(0)})}{t} + a \pd{}{t}\left(T^{(0)}\right)^4 = \nabla \cdot \frac{a
    c}{3 \sigma} \nabla \left(T^{(0)}\right)^4, \qquad I^{(0)} = ac\left(T^{(0)}\right)^4.
\end{equation}
In this approximation, the radiative flux $F(t, \bx)$ is related to
the material temperature by the Fick's law of diffusion given by
\begin{equation}
  \label{eq:Fick}
 F(t, \bx) = \int_{\bbS^2} \bsOmega I \dd \bsOmega  = -\frac{ac}{3 \sigma} \nabla T^4.
\end{equation}
Moreover, at this time the total energy \eqref{eq:total_energy} is
expressed as
\begin{equation}
  \label{eq:total_energy_fin}
  \mE = U^{(0)} + a \left(T^{(0)}\right)^4.
\end{equation}
When there is no energy exchange between the specific intensity and the material,
we get another form of the radiative transfer equation
\cite{seibold2014starmap, Mie2013} as
\begin{equation}
  \label{eq:linear_RTE}
  \dfrac{\epsilon^2}{c} \pd{I}{t} + \epsilon \bsOmega \cdot \nabla I
  = \sigma \left(\frac{1}{4\pi} \int I \dd \bsOmega - I\right), \\
\end{equation}
where the linear operator $\frac{1}{4\pi} \int I \dd \bsOmega - I$
models the scattering of the particles by the medium \cite{Mie2013}.

Direct simulation of the radiative transfer equations \eqref{eq:RTE}
is costly for several reasons. First, the independent variables of
\eqref{eq:RTE} are position, angle, and time, which are usually
seven-dimensional, making it expensive to simulate.  Then, radiation
travels at the speed of light, which makes the limit on the time-step
length for time-explicit schemes quite restrictive. Finally, when the
parameter $\epsilon$ is small, \eqref{eq:RTE} contains stiff source
terms, leading to stringent restrictions on the time step for
time-explicit schemes as well. On the other hand, \eqref{eq:RTE} goes
to the diffusion limit \eqref{eq:limit} as $\epsilon$ approaches zero,
which requires us to construct the asymptotic-preserving (AP) schemes
to solve this problem \cite{Jin2000, Jin1993}.


\subsection{$P_N$ system}
The $P_N$ method approximates the angular dependence of \eqref{eq:RTE}
by a series expansion of the spherical harmonics function. The moments
of the specific intensity is defined as
\begin{equation}
  \label{eq:moment}
  I_{l}^{m}(t, \bx) = 2\sqrt{\pi} \int_{\bbS^2} \overline{Y}_{l}^m(\bsOmega) I(
  t, \bx, \bsOmega) \dd \bsOmega, 
\end{equation}
where $Y_{l}^{m}(\bsOmega)$ is the spherical harmonics
\begin{equation}
  \label{eq:sp}
  Y_{l}^{m}(\bsOmega) =  \sqrt{\frac{2l + 1}{4 \pi}
  \frac{(l-m)!}{(l+m)!}} P_l^m(\cos \theta) \exp(im\phi), \qquad
  \bsOmega = (\sin \theta \cos \phi, \sin \theta \sin \phi,
  \cos \theta)^T,
\end{equation}
with $P_l^m(x)$ an associated Legendre polynomial.  Multiplying
\eqref{eq:I_rt} on both sides by $\overline{Y^m_l}(\bsOmega)$ and
integrating over $\bsOmega$, we can derive the detailed form of the
$P_N$ equations for \eqref{eq:I_rt} 
  \begin{equation}
  \label{eq:moment_equation_3d}
  \begin{split}
    & \frac{\epsilon^2}{c} \pd{I_l^m}{t} +
    \frac{\epsilon}{2}\pd{}{x}\left( -C_{l-1}^{m-1}I_{l-1}^{m-1} +
      D_{l+1}^{m-1} I_{l+1}^{m-1}+ \mE_{l-1}^{m+1} I_{l-1}^{m+1} -
      F_{l+1}^{m+1} I_{l+1}^{m+1}
    \right)\\
    & \qquad + \frac{\imag \epsilon}{2}\pd{}{y}\left(
      C_{l-1}^{m-1}I_{l-1}^{m-1} - D_{l+1}^{m-1} I_{l+1}^{m-1}+
      \mE_{l-1}^{m+1} I_{l-1}^{m+1} - F_{l+1}^{m+1} I_{l+1}^{m+1}
    \right) \\
& \qquad + \epsilon\pd{}{z} \left(A_{l-1}^m I_{l-1}^m +
      B_{l+1}^mI_{l+1}^m\right) = -\sigma I_l^m + \sigma ac T^4
    \delta_{l0}\delta_{m0}, \qquad l \in \bbN, \qquad m \in \bbZ,
    \quad |m| \leqslant l.
  \end{split}
\end{equation}
Here, we only consider the problem which is symmetric with the $x-z$
plane, and then \eqref{eq:moment_equation_3d} is reduced into the 2D
equation \cite{McCl2008}
 \begin{equation}
  \label{eq:moment_equation}
  \begin{split}
    & \frac{\epsilon^2}{c} \pd{I_l^m}{t} +
    \frac{\epsilon}{2}\pd{}{x}\left( -C_{l-1}^{m-1}I_{l-1}^{m-1} +
      D_{l+1}^{m-1} I_{l+1}^{m-1}+ \mE_{l-1}^{m+1} I_{l-1}^{m+1} -
      F_{l+1}^{m+1} I_{l+1}^{m+1}
    \right)\\
    & \qquad + \epsilon\pd{}{z} \left(A_{l-1}^m I_{l-1}^m +
      B_{l+1}^mI_{l+1}^m\right) = -\sigma I_l^m + \sigma ac T^4
    \delta_{l0}\delta_{m0}, \qquad l \in \bbN, \qquad m \in \bbZ,
    \quad |m| \leqslant l,
  \end{split}
\end{equation}
where $\delta_{ij}$ is the Kronecker-delta function, and the
coefficients are
\begin{equation}
  \label{eq:coe_moment}
  \begin{split}
    & A_l^{m} = \sqrt{\frac{(l -m +1)(l + m +1)}{(2l+3)(2l+1)} }, \qquad
    B_l^m = \sqrt{\frac{(l-m)(l+m)}{(2l+1)(2l-1)}}, \\
    & C_l^{m} = \sqrt{\frac{(l +m +1)(l + m +2)}{(2l+3)(2l+1)} }, \qquad
    D_l^m = \sqrt{\frac{(l-m)(l-m-1)}{(2l+1)(2l-1)}}, \\
    & \mE_l^{m} = \sqrt{\frac{(l -m +1)(l - m +2)}{(2l+3)(2l+1)} }, \qquad
    F_l^m = \sqrt{\frac{(l+m)(l+m-1)}{(2l+1)(2l-1)}}. \\
  \end{split}
\end{equation}
The derivation of the moment system \eqref{eq:moment_equation} is
discussed in the literature, and we refer to
\cite{seibold2014starmap, McCl2008} for more details. Especially, the
governing equation for the zeroth moment $I_0^0$ is
\begin{equation}
  \label{eq:I0}
     \frac{\epsilon^2}{c} \pd{I_0^0}{t} + \frac{\epsilon}{2}\pd{}{x}
    \left( D_{1}^{-1} I_{1}^{-1}  - 
    F_{1}^1 I_{1}^{1} \right)+ \epsilon\pd{}{z} B_{1}^0I_{1}^0 = -\sigma
    I_0^0 + \sigma acT^4,
\end{equation}
and the energy density is
\begin{equation}
  \label{eq:relation_E_I}
  E = \frac{1}{c} \int_{\bbS^2} I(t, \bx, \bsOmega) \dd \bsOmega =
  \frac{I_0^0}{c} .
\end{equation}
In the framework of $P_N$ method, the governing equation of the total
energy \eqref{eq:energy} is
\begin{equation}
  \label{eq:Pn_energy}
  C_{v} \pd{T}{t} + \frac{1}{c}\pd{I_0^0}{t} +
  \frac{1}{\epsilon}
  \left(\frac{1}{2}\pd{}{x}
    \left(D_{1}^{-1} I_{1}^{-1}  - F_{1}^1 I_{1}^{1}\right)+ \pd{}{z} B_{1}^0I_{1}^0\right)
  = 0. 
\end{equation}
Moreover, a finite system is needed for the numerical simulation, and
the specific intensity $I(t, \bx,\bsOmega)$ is approximated as
\begin{equation}
  \label{eq:I}
  I(t,\bx, \bsOmega) \approx \sum\limits_{l \leqslant M} \sum\limits_{|m| \leqslant
    l} \frac{I_{l}^m}{2\sqrt{\pi}} Y_l^m(\bsOmega),  
\end{equation}
where $M$ is the truncation order.  Then, we can derive the final
$P_N$ system for (2.1) as 
\begin{equation}
  \label{eq:final_Pn}
  \begin{aligned}
&    \frac{\epsilon^2}{c} \pd{I_0^0}{t} + \frac{\epsilon}{2}\pd{}{x}
    \left( D_{1}^{-1} I_{1}^{-1}  - 
    F_{1}^1 I_{1}^{1} \right)+ \epsilon\pd{}{z} B_{1}^0I_{1}^0 = -\sigma
    I_0^0 + \sigma acT^4, \\
& \frac{\epsilon^2}{c} \pd{I_l^m}{t} +
    \frac{\epsilon}{2}\pd{}{x}\left( -C_{l-1}^{m-1}I_{l-1}^{m-1} +
      D_{l+1}^{m-1} I_{l+1}^{m-1}+ \mE_{l-1}^{m+1} I_{l-1}^{m+1} -
      F_{l+1}^{m+1} I_{l+1}^{m+1}
    \right)\\
    & \qquad + \epsilon\pd{}{z} \left(A_{l-1}^m I_{l-1}^m +
      B_{l+1}^mI_{l+1}^m\right) = -\sigma I_l^m, \qquad l  \leqslant M, \qquad m \in \bbZ,
    \quad |m| \leqslant l.
  \end{aligned}
\end{equation}
Here $I_{M+1}^m$ is simply set as zero to get the closed system as in
\cite{seibold2014starmap, McCl2008}, and the resulting $P_N$ system
is globally hyperbolic.  

Gathering \eqref{eq:final_Pn} and \eqref{eq:Pn_energy}, we obtain the
governing equations for the $P_N$ system, which will reduce the
computational complexity when simulating \eqref{eq:RTE}. It is widely
used to solve RTE, such as in \cite{semi2008Ryan, Mie2013,
  seibold2014starmap}. However, the time step limitation and the
multi-scale problem brought by the small mean free path still exist
for $P_N$ system. In the next sections, we will propose an AP scheme
for the $P_N$ system to release the restriction on the time step
length.


\section{Asymptotic-preserving IMEX method}
\label{sec:method}
In this section, we will introduce an asymptotic-preserving IMEX
numerical scheme to solve \eqref{eq:final_Pn} and
\eqref{eq:Pn_energy}. We will begin from the order analysis of the
expansion coefficients with respect to the parameter $\epsilon$, based
on which we will propose the new numerical scheme.

\subsection{Formal order analysis}

In this section, we will analyze the order of the expansion
coefficients $I_l^{m}$ based on $\epsilon$. One possible way to
describe the accuracy of the moment models in the near-continuum
region is through the Chapman-Enskog method. When in such a regime,
the parameter $\epsilon$ is regarded as a small number. Thus the
Chapman-Enskog expansion could be applied, and the specific intensity
$I$ is expanded in power series of $\epsilon$,
  \begin{equation}
    \label{eq:CE_I}
    I = I^{(0)} + \epsilon I^{(1)} + \epsilon^2 I^{(2)} +
    \cdots.
  \end{equation}
  Define
  \begin{equation}
    I_{l}^{m,(k)} =
    2\sqrt{\pi} \int_{\bbS^2} \overline{Y}_{l}^m(\bsOmega) I^{(k)}(
    t, \bx, \bsOmega) \dd \bsOmega.
  \end{equation}
  We claim that for
  fixed  $k$,
  \begin{equation}
    \label{eq:claim}
    I^{m,(k)}_l = 0, \qquad  {\rm for}\quad l > k. 
  \end{equation}
  This could be proved by mathematical induction through the following
  steps: \begin{enumerate} \item If $k$ equals zero, matching the
    order of $\mathcal{O}(1)$ in \eqref{eq:I_rt}
    shows
    \begin{equation}
      I^{(0)} = \dfrac{1}{4\pi} a c  T^4.
    \end{equation}
    Then, by the orthogonality of the spherical harmonic functions, it
    holds that
    \begin{equation}
      I^{m,(0)}_l = \int_{\bbS^2}\overline{Y}_l^m I^{(0)} \dd\bsOmega = 0, \quad \forall~
      l \geqslant 1.  
    \end{equation}
    Therefore, this claim holds for $k = 0$.

  \item Assuming this claim holds for $k \leqslant n$ as
    \begin{equation}
      \label{eq:k<n}
      I_l^{m, (k)} = 0, \qquad  \forall ~k \leqslant n, \qquad {\rm if}~ l > k,
    \end{equation}
    then, it holds that
    \begin{equation}
      \label{eq:k=n}
            I_l^{m,(n-1)} = 0, \quad I_l^{m, (n)} = 0,  \qquad {\rm if}~ l > n,
    \end{equation}
    and we consider the case $k = n+1$.

  \item If $k$ equals $n+1$, rewrite the governing equation
      \eqref{eq:moment_equation} into
      \begin{equation}
        \label{eq:moment_equation_1}
        \begin{split}
          & \frac{\epsilon^2}{c} \pd{I_l^m}{t} + \epsilon \nabla_{\bx}
          F(I_{l-1}^{m}) + \epsilon \nabla_{\bx} G(I_{l+1}^{m}) = -\sigma
          I_l^m, \qquad l > 0, \quad m \in \bbZ, ~ |m| \leqslant l,
        \end{split}
      \end{equation}
      where
      \begin{equation}
        \label{eq:coe}
        \begin{split}
          & \nabla_{\bx}F(I_{l-1}^{m}) = \frac{\partial}{2\partial x}\Big(
          -C_{l-1}^{m-1}I_{l-1}^{m-1} + \mE_{l-1}^{m+1} I_{l-1}^{m+1}
          \Big) + \pd{}{z} A_{l-1}^m I_{l-1}^m, \\
          & \nabla_{\bx}G(I_{l+1}^m) = \frac{\partial}{2\partial x}\Big(D_{l+1}^{m-1}
          I_{l+1}^{m-1}- F_{l+1}^{m+1} I_{l+1}^{m+1}\Big) + \pd{}{z}
          B_{l+1}^mI_{l+1}^m .
        \end{split}
      \end{equation}
      Matching the terms at order $\epsilon^{n+1}$ yields
\begin{equation}
  \begin{split}
    & \frac{1}{c} \pd{I_{l}^{m,(n-1)}}{t} + \nabla_{\bx}
    F(I_{l-1}^{m,(n)}) + \nabla_{\bx} G(I_{l+1}^{m,(n)}) = -\sigma
    I_l^{m,(n+1)}, \qquad l > 0, \quad m \in \bbZ, ~ |m| \leqslant l.
  \end{split}
\end{equation}
Therefore, we can derive that 
\begin{equation}
  \label{eq:I_l_m_ce}
  \begin{split}
    & I_l^{m,(n+1)} = -\dfrac{1}{\sigma} \left(\frac{1}{c}
      \pd{I_{l}^{m,(n-1)}}{t} + \nabla_{\bx} F(I_{l-1}^{m,(n)}) +
      \nabla_{\bx} G(I_{l+1}^{m,(n)})\right), \qquad l > 0, \quad m
    \in \bbZ, ~ |m| \leqslant l.
  \end{split}
\end{equation}
Due to the assumption \eqref{eq:k=n}, it holds that $I_l^{m, (n-1)}$,
$I_{l-1}^{m,(n)}$ and $I_{l+1}^{m,(n)}$ are all zero if $l >
n+1$. Therefore, $I_l^{m,(n+1)} = 0$ for $l > n+1$.

\item By induction, $I^{m,(k)}_l = 0$ for all $k$ when $l > k$, and we
  have proved the claim \eqref{eq:claim}.
\end{enumerate}

Then, based on \eqref{eq:claim}, we could derive the order of $I_l^m$. Precisely, 
from \eqref{eq:I0}, matching the order of $\mathcal{O}(1)$ shows
\begin{equation}
  \label{eq:order_I0}
  I_0^{0,(0)} = a c T^4.
\end{equation}
Thus, the leading order term of $I^0_0$ is $\mathcal{O}(1)$. Plugging
\eqref{eq:CE_I} into \eqref{eq:moment}, we can obtain the Chapman-Enskog
expansion of $I^m_l$ as 
\begin{equation}
  I^m_l = I^{m,(0)}_l + \epsilon I^{m,(1)}_l + \cdots + \epsilon^l
  I^{m,(l)}_l + \cdots.
\end{equation}
Based on \eqref{eq:claim}, we can conclude that the leading order of
$I_l^m$ is $\epsilon^l I_l^{m,(l)}$. Then, let $n$ equal $l-1$ in
\eqref{eq:I_l_m_ce}, it holds that
\begin{equation}
  \label{eq:order_Il}
  I_l^{m,(l)} = -\dfrac{1}{\sigma} \nabla_{\bx}
  F(I_{l-1}^{m,(l-1)}) = \mathcal{O}(1), \qquad l > 0, \quad m \in \bbZ, ~ |m| \leqslant l,
  \end{equation}
  with the other two terms equaling zero. Thus, the final expression of $I_l^m$ holds that 
  \begin{equation}
  \label{eq:order_l}
  I_l^m = -\epsilon^l \dfrac{1}{\sigma} \nabla_{\bx}
    F(I_{l-1}^{m,(l-1)}) + \mathcal{O}(\epsilon^{l+1}),
\end{equation}
which indicates  the order of $I_l^m$ is $\mathcal{O}(\epsilon^l)$ as
\begin{equation}
  \label{eq:order_Ilm}
  I_l^m = \mathcal{O}(\epsilon^l).
\end{equation}

\begin{remark} 
  Substituting \eqref{eq:order_I0} and
  \eqref{eq:order_Il} for $l=1$ into the equation of total energy
  \eqref{eq:Pn_energy}, we can obtain the same diffusion equation of $T$
  as \eqref{eq:limit} in the framework of $P_N$ method with $\epsilon$
  going to zero.
\end{remark}

\subsection{Semi-discrete IMEX methods}
For now, we have obtained the order of $I_l^m$ with respect to $\epsilon$,
based on which, we will introduce the semi-discrete scheme in time
with globally stiffly accurate IMEX RK scheme. From the formal order
analysis, it holds in \eqref{eq:moment_equation_1} that 
\begin{equation}
  \label{eq:order_Pn}
  \begin{split}
    \epsilon \nabla_{\bx} G(I_{l+1}^{m}) =
    \mathcal{O}(\epsilon^{l+2}), \qquad \epsilon \nabla_{\bx}
    F(I_{l-1}^{m}) = \mathcal{O}(\epsilon^l), \qquad -\sigma I_l^m +
    \sigma ac T^4 \delta_{l0}\delta_{m0} = \mathcal{O}(\epsilon^{l}).
  \end{split}
\end{equation}
The implicit-explicit strategy adopted here is to treat all the terms
with higher-order of $\epsilon$ explicitly while others implicitly.
Thus, in the numerical scheme,
$\nabla_{\bx} G(I_{l+1}^{m}) = \mathcal{O}(\epsilon^{l+2})$, which is
at the higher-order of $\epsilon$, is set as the explicit term while
$\nabla_{\bx} F(I_{l-1}^{m})$ and the energy exchange terms on the
right side, which are at the lower-order of $\epsilon$, are set as the
implicit terms.  Based on this, the first-order semi-discrete
scheme is proposed as below.

\paragraph{First-order scheme}
Given $(I_{l}^m)^n$ and $T^n$ to approximate the solution $I_l^m$ and
$T$ at time $t^n$, the first-order semi-discrete scheme to update the
specific intensity is
\begin{subequations}
  \label{eq:first_order}
  \begin{align}
    \label{eq:first_I0}
    & \frac{\epsilon^2}{c} \frac{(I_{0}^0)^{n+1} - (I_{0}^0)^n }{\Delta t}
      +   \epsilon\Big(\nabla_{\bx} G(I_{1}^{0}) \Big)^n =\sigma^n \Big(
      a c(T^{n+1})^4 - (I_{0}^0)^{n+1}\Big), \\
     \label{eq:first_T}
    & C_{v} \frac{T^{n+1} - T^n}{\Delta t} +
      \frac{1}{c}\frac{(I_0^0)^{n+1} - (I_{0}^0)^n}{\Delta t}  +
      \frac{1}{\epsilon}  
      \Big(\nabla_{\bx} G(I_{1}^{0}) \Big)^n  = 0,  \\
    \label{eq:first_I}
    & \frac{\epsilon^2}{c} \frac{(I_{l}^m)^{n+1} - (I_{l}^m)^n
      }{\Delta t} + \epsilon \Big( \nabla_{\bx} F(I_{l-1}^{m}) \Big)^{n+1} +
      \epsilon\Big(\nabla_{\bx} G(I_{l+1}^{m}) \Big)^n =
      -\sigma^{n+1}(I_{l}^m)^{n+1}. 
     \end{align}
\end{subequations}
Though the dominating terms in \eqref{eq:first_I} are treated
implicitly, this first-order numerical system \eqref{eq:first_order}
can be actually solved at the same computation cost as an explicit
scheme. This is for the reason that when solving \eqref{eq:first_I},
the terms with implicit scheme are already known.  In the
implementation, the coupled system \eqref{eq:first_I0}
and \eqref{eq:first_T} will be solved firstly to update $(I_0^0)^{n+1}$ and
$T^{n+1}$. Substituting \eqref{eq:first_I0} into \eqref{eq:first_T}
yields a fourth-order polynomial equation of $T^{n+1}$ as
\begin{equation}
  \label{eq:T_n+1}
  C_{v} T^{n+1} + \frac{\Delta t a c}{\epsilon^2+ \sigma^n \Delta
    t c}  (T^{n+1})^4 -\Bigg( C_{v} T^n + 
  \frac{\sigma^n \Delta t}{\epsilon^2 + \sigma \Delta t c}(I_0^0)^n -
  \frac{\Delta t^2 c \sigma^n}{\epsilon( \epsilon^2 + \sigma^n \Delta
    t c)} \Big(\nabla_{\bx} G(I_{1}^{m}) \Big)^n \Bigg)  =
    0,
\end{equation}
the solution of which will be guaranteed by the proposition below.
\begin{proposition}\label{lem:T_solution}
  The equation \eqref{eq:T_n+1} has only one positive solution if
  positive solutions for $I^0_0$ and $T$ exist.
\end{proposition}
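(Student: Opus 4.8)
The plan is to recast \eqref{eq:T_n+1} as a single-variable root-finding problem $f(T^{n+1}) = 0$, where
\[
  f(T) = C_{v} T + \frac{\Delta t\, a c}{\epsilon^2 + \sigma^n \Delta t\, c}\, T^4 - b,
\]
and $b$ denotes the entire bracketed expression on the right-hand side of \eqref{eq:T_n+1}, which is a fixed constant independent of the unknown $T^{n+1}$ (it depends only on the known quantities $T^n$, $(I_0^0)^n$, and $(\nabla_{\bx} G(I_1^m))^n$). The crux of the argument is that both coefficients multiplying the powers of $T$ are strictly positive: $C_{v} > 0$ holds by \eqref{eq:U_T}, and the quartic coefficient $\Delta t\, a c / (\epsilon^2 + \sigma^n \Delta t\, c)$ is positive because $\Delta t$, $a$, $c$, and $\epsilon^2$ are all positive while the opacity $\sigma^n$ is nonnegative, so the denominator is strictly positive.

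Granting this, I would simply differentiate and exploit strict monotonicity. Computing
\[
  f'(T) = C_{v} + 4\,\frac{\Delta t\, a c}{\epsilon^2 + \sigma^n \Delta t\, c}\, T^3,
\]
one sees that for every $T > 0$ this is a sum of two strictly positive terms, hence $f'(T) > 0$ on $(0, \infty)$. Therefore $f$ is strictly increasing on the positive half-line, and a strictly monotone function takes each value at most once. In particular $f$ has at most one zero in $(0, \infty)$, which is precisely the assertion that \eqref{eq:T_n+1} admits at most one positive solution. The qualifier ``if positive solutions exist'' is exactly what makes this a pure uniqueness statement, so no existence analysis (e.g.\ sign-change or intermediate-value reasoning) is needed.

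I do not anticipate any genuine obstacle in this proof: the whole matter reduces to verifying the signs of the linear and quartic coefficients and then invoking strict monotonicity of $f$ on $(0,\infty)$. The only step meriting an explicit line is the positivity of the quartic coefficient, which rests on the denominator $\epsilon^2 + \sigma^n \Delta t\, c$ being positive; this follows at once from the nonnegativity of $\sigma^n$ together with the positivity of the remaining physical parameters.
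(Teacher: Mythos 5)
Your proof is correct, and it supplies exactly the elementary argument the paper alludes to when it states that ``the proof of Proposition \ref{lem:T_solution} is simple and will be omitted'': since $C_v>0$ by \eqref{eq:U_T} and the quartic coefficient $\Delta t\, ac/(\epsilon^2+\sigma^n\Delta t\, c)$ is positive, the function $f(T)=C_v T+\alpha T^4-b$ is strictly increasing on $(0,\infty)$, hence has at most one positive root. There is nothing in the paper to compare against beyond this, and your treatment of the statement as pure uniqueness (no existence claim) matches the proposition's wording precisely.
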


\begin{proof}
 Let
 \begin{equation}
   \label{eq:f}
   f(T) =  C_{v} T + \frac{\Delta t a c}{\epsilon^2+ \sigma^n \Delta
    t c}  T^4 -\Bigg( C_{v} T^n + 
  \frac{\sigma^n \Delta t}{\epsilon^2 + \sigma \Delta t c}(I_0^0)^n -
  \frac{\Delta t^2 c \sigma^n}{\epsilon( \epsilon^2 + \sigma^n \Delta
    t c)} \Big(\nabla_{\bx} G(I_{1}^{m}) \Big)^n \Bigg).
\end{equation}
From \eqref{eq:f}, it is easy to verify that
 \begin{equation}
     \label{eq:f_T}
     f(0) < 0, \qquad f'(T) > 0.
 \end{equation}
 Then, this proposition holds.
 \end{proof}

\begin{remark}
Though $P_N$ is not a positive preserving method, for most of the
problems we tested, $I_0^0$ and $T$ are kept positive in the
computation. Positivity-preserving schemes for $I_0^0$ and $T$ will be
the subject of future investigation.
\end{remark}

  \begin{remark}
    To get $T^{n+1}$ efficiently, the GNC Scientific Library is
    utilized here to solve the fourth-order polynomial equation
    \eqref{eq:T_n+1}, which will make the computational time of the
    non-linear iteration to get $T^{n+1}$ negligible.
  \end{remark}

  This first-order semi-discrete numerical scheme can be extended to
  the higher-order IMEX RK scheme naturally, which is listed below.

\paragraph{Higher-order IMEX RK scheme}

To achieve higher-order accuracy in time, the globally stiffly
accurate IMEX RK scheme is adopted here. The IMEX RK scheme is widely
discussed \cite{Jang2015, xiong2015, Peng2020}. Thus, we only list the
scheme here. The higher-order scheme is combined with the same
implicit-explicit strategy as in the first-order case. Precisely, the
exact form is
\begin{subequations}
  \label{eq:high_order}
 \begin{align}
    \label{eq:high_order_I0}
    & \frac{\epsilon^2}{c} (I_{0}^0)^{n+1} = \frac{\epsilon^2}{c}
      (I_{0}^0)^n    - \Delta t \sum_{k = 1} ^{s} \tilde{b}_{k}
      \epsilon  \Big(\nabla_{\bx} G(I_{1}^{0}) \Big)^{n+1,k}  + \Delta t
      \sum_{k = 1} ^s b_{k}  \sigma^{n+1, k-1} \Big(a c (T^{n+1, k})^4 -
      (I_{0}^0)^{n+1, k}\Big), \\
    \label{eq:high_order_T}
     & C_{v} \frac{ T^{n+1} -  T^n}{\Delta t} +
       \frac{1}{c}\frac{(I_0^0)^{n+1} - (I_{0}^0)^n}{\Delta t}  
       + \frac{\Delta t}{\epsilon}  \sum_{k = 1} ^{s} \tilde{b}_{k}
       \left(\nabla_{\bx}  G(I_1^0)\right)^{n+1, k} = 0, \\
    \label{eq:high_order_I}
    & \frac{\epsilon^2}{c} (I_{l}^m)^{n+1} = \frac{\epsilon^2}{c} (I_{l}^m)^n 
      - \Delta t \sum_{k = 1} ^{s} \tilde{b}_{k} \epsilon
      \Big(\nabla_{\bx} G(I_{l+1}^{m}) \Big)^{n+1, k} \\\nonumber
    & \hspace{5cm} - \Delta t \sum_{k = 1} ^{s} b_{k}
      \Big( \Big( \epsilon\nabla_{\bx} F(I_{l-1}^{m})\Big)^{n+1,k}
      - \sigma^{n+1, k}(I_{l}^m)^{n+1, k}\Big) = 0,
  \end{align}
\end{subequations}
where the approximations at the internal stages of an RK step  satisfy
\begin{subequations}
      \label{eq:high_order_local}
      \begin{align}
    \label{eq:high_order_I0_local}
    & \frac{\epsilon^2}{c} (I_{0}^0)^{n+1, k} = \frac{\epsilon^2}{c}
      (I_{0}^0)^n    - \Delta t \sum_{j = 1} ^{k-1} \tilde{a}_{kj}
      \epsilon  \Big(\nabla_{\bx} G(I_{1}^{0}) \Big)^{n+1, j}  + \Delta t
      \sum_{j = 1} ^k a_{kj}  \sigma^{n+1, j-1} \Big(a c (T^{n+1, j})^4 -
      (I_{0}^0)^{n+1, j}\Big), \\
    \label{eq:high_order_T_local}
    & C_{v} \frac{ T^{n+1, k} -  T^n}{\Delta t} +
      \frac{1}{c}\frac{(I_0^0)^{n+1, k} - (I_{0}^0)^n}{\Delta t}  
      + \frac{\Delta t}{\epsilon}  \sum_{j = 1} ^{k-1} \tilde{a}_{kj}
      \left(\nabla_{\bx}  G(I_1^0)\right)^{n+1, j} = 0, \\
        \label{eq:high_order_I_local}
    & \frac{\epsilon^2}{c} (I_{l}^m)^{n+1, k} = \frac{\epsilon^2}{c} (I_{l}^m)^n 
      - \Delta t \sum_{j = 1} ^{k-1} \tilde{a}_{kj} \epsilon
      \Big(\nabla_{\bx} G(I_{l+1}^{m}) \Big)^{n+1, j} 
  \\ \nonumber 
  & \hspace{5cm} - \Delta t \sum_{j = 1} ^{k} a_{kj} \Big( \Big(\epsilon
      \nabla_{\bx} F(I_{l-1}^{m})\Big)^{n+1, j} 
      - \sigma^{n+1, j}(I_{l}^m)^{n+1, j}\Big) = 0.    
  \end{align}
\end{subequations}

    \begin{remark}
      The coefficients $I_{l}^m$ are also numerically solved
      successively as in \eqref{eq:high_order}. Since the lower-order
      terms in \eqref{eq:high_order_I} are already known, the
      convection terms can be derived explicitly. Moreover, the
      opacity $\sigma^{n+1, l}$ in \eqref{eq:high_order_I} and
      \eqref{eq:high_order_I_local} can be computed explicitly as
      $(I_{0}^0)^{n+1, l}$ and $T^{n+1,l}$ are already known with
      $\sigma^{n+1, 0}$ chosen as $\sigma^{n}$ (which is a function of
      $T^n$).
    \end{remark}

    The coefficients
    $ \tilde{\boldsymbol b} =(\tilde{b}_l) , {\boldsymbol b} = (b_l),
    \mathcal{A} = (a_{lj})$
    and $\tilde{\bf \mathcal{A}} = (\tilde{a}_{lj})$ can be presented
    with a double Butcher tableau as
\begin{equation}
  \label{eq:IMEX_coe}
  \begin{array}{c|c}
    \tilde{\bc}& \tilde{\bf \mathcal{A}} \\\hline 
               & \tilde{\boldsymbol b}^T
  \end{array}, \qquad
  \begin{array}{c|c}
    \bc & \bf \mathcal{A} \\\hline 
               & \boldsymbol b^T
  \end{array}.
\end{equation}
The second-order and third-order globally stiffly accurate IMEX
schemes used here are the ARS$(2,2,2)$ and ARS$(4, 4, 3)$ scheme,
where the exact Butcher tableaus are as below
\begin{equation}
  \label{eq:IMEX_2}
  \begin{array}{c|ccc}
    0& 0 &0 & 0  \\
    \gamma  & \gamma & 0 & 0 \\
    1 & \delta  & 1-\delta & 0  \\
    \hline
     & \delta & 1 - \delta & 0
  \end{array},
\qquad 
  \begin{array}{c|ccc}
    0& 0 &0 & 0  \\
    \gamma  & 0 & \gamma & 0  \\
    1 & 0  & 1-\gamma & \gamma  \\
    \hline
     & 0 & 1 - \gamma & \gamma
  \end{array}, \qquad \gamma = 1 - \frac{1}{\sqrt{2}}, \qquad  \delta
  = 1 - \frac{1}{2\gamma}, 
  \end{equation}
and 
\begin{equation}
  \label{eq:IMEX_3}
  \begin{array}{c|ccccc}
    0& 0 &0 & 0  & 0 & 0\\
    1/2 &  1/2 & 0 & 0 & 0 & 0 \\
    2/3 & 11/18 & 1/18 & 0 & 0 & 0 \\
    1/2 & 5/6 & -5/6 & 1/2 & 0 & 0 \\
    1 & 1/4 & 7/4 & 3/4 & -7/4 & 0  \\
    \hline
     & 1/4 & 7/4 & 3/4 & -7/4 & 0  
   \end{array},
\qquad 
\begin{array}{c|ccccc}
  0& 0 &0 & 0  & 0 & 0\\
  1/2 & 0 &  1/2 & 0 & 0 & 0 \\
  2/3 & 0 &  1/6 & 1/2& 0 & 0 \\
  1/2 & 0 &  -1/2 & 1/2 & 1/2 & 0  \\
  1 &  0 & 3/2 & -3/2 & 1/2 & 1/2  \\
  \hline
   &  0 & 3/2 & -3/2 & 1/2 & 1/2  \\
\end{array}.
\end{equation}

The IMEX RK methods are fully discussed in the literature, and we
refer to \cite{xiong2015, Jang2015, Peng2020} and the references
therein for more details.

\subsection{Fully discrete numerical scheme}
We have introduced the time-discretization in the last subsection. In
this subsection, the discretization in the spatial space will be
discussed. The finite volume method to discretize the spatial space is
presented here.

\subsubsection{Spatial Discretization}
The $P_N$ equations \eqref{eq:moment_equation} and
\eqref{eq:Pn_energy} are discretized by the finite volume method with
linear or third-order WENO reconstruction in space. Let
$x_i = i \Delta x, z_j = j \Delta z$ and $t^n = n \Delta t$ be the
uniform mesh in Cartesian coordinates. Let $(i, j)$ denote the cell
$\{(x, z): x_{i-1/2} < x < x_{i+1/2}, z_{j-1/2} < z <
z_{j+1/2}\}$. $(I_{i,j, l}^m)^n$ and $T_{i,j}^{n}$ are the averaged
expansion coefficients of the specific intensity and the temperature,
respectively. To get the numerical flux for $P_N$ system, we rewrite
the convection part of \eqref{eq:first_I0} and \eqref{eq:first_I}
together as
\begin{equation}
  \label{eq:con}
  \mathcal{C} =  \epsilon \left( \bar{\bA}_x^{\rm low} \pd{\bbI}{x}  +
    \bar{\bA}_z^{\rm low} \pd{\bbI}{z}\right)^{n+1}
  + \epsilon\left( \bar{\bA}_x^{\rm up} \pd{\bbI}{x}  
    + \bar{\bA}_z^{\rm up} \pd{\bbI}{z}\right)^{n}, 
\end{equation}
where $\bbA_x^{\rm low}$, $\bbA_z^{\rm low}$, $\bbA_x^{\rm up}$,
$\bbA_z^{\rm up}$ are made up by $F(I_{l-1}^m)$ and $G(I_{l+1}^m)$,
respectively with $\bbI = (I_0^0, I_1^{-1}, I_1^0, \cdots)$.  The
Lax-Friedrichs scheme is utilized here to obtain the numerical flux as
\begin{equation}
  \label{eq:flux}
  \begin{split}
    &\left( \epsilon \bbA_x^{s} \pd{\bbI}{x}\right)_{i,j}^{l} \approx
    \frac{1}{\Delta x} \Big(\mF^{s,l}_{i,j}\left(\bbI_{i,j}^{l},
      \bbI_{i+1, j}^{l}\right) - \mF^{s,l}_{i,j}\left(\bbI_{i-1,j}^{l},
      \bbI_{i, j}^{l}\right)\Big), \quad l = n, n+1,
    \quad s  = {\rm low, up},    \\
    & \left(\epsilon \bbA_z^{s} \pd{\bbI}{z}\right)_{i,j}^{l} \approx
    \frac{1}{\Delta z} \Big(\mG^{s,l}_{i,j}\left(\bbI_{i,j}^{l},
      \bbI_{i, j+1}^{l}\right) - \mG^{s,l}_{i,j}\left(\bbI_{i,j-1}^{l},
      \bbI_{i, j}^{l}\right)\Big),
    \quad l = n, n+1,   \quad s  = {\rm low, up}, \\
  \end{split}
\end{equation}
where the exact form of $\mF^{s,l}_{i,j}(U_1, U_2)$ and
$\mG^{s,l}_{i,j}(U_1, U_2)$ are
\begin{equation}
  \label{eq:flux_force}
  \begin{aligned}
    \mS_{i,j}^{s,l}(U_1, U_2) &= \frac{\epsilon}{2} { \bbA_w^{s}}(U_1
    + U_2) - {\rm coe}(s, l)\frac{ \alpha_{i,j} \epsilon
    }{2}(U_2-U_1), \qquad w = x, z, \qquad \mS = \mF, \mG, 
   \end{aligned}
 \end{equation}
 with
 \begin{equation}
   \label{eq:alpha_0}
  \alpha_{i,j} =   \alpha(\sigma_{i,j}, \epsilon) = \exp(-\sigma_{i,j} /\epsilon^2),
\end{equation}
and
\begin{equation}
  \label{eq:coe1}
  {\rm coe}(s, l) = \left\{
      \begin{array}{cc}
        1, &  s = {\rm up} ~{\rm or}~ (l = M~ {\rm and}~ s = {\rm low}), \\
        0, & {\rm otherwise.}
      \end{array}
    \right.
  \end{equation}
  The parameter ${\rm coe}(s, l)$ is utilized here to make sure that
  the diffusion term will only appear once in the numerical flux.

    \begin{remark}
      The coefficient $\alpha(\sigma, \epsilon)$ is an artificial
      parameter inspired by \cite{Control2020}, and is chosen to
      ensure the stability of the numerical scheme when $\epsilon$ is
      small.  The numerical scheme will reduce to the central
      difference scheme with $\epsilon$ going to zero, and remains to
      be the Lax-Friedrichs numerical scheme when $\epsilon$ is
      large. We have proved the stability of the first-order numerical
      scheme with this $\alpha$ in the following sections by Fourier
      analysis and energy stability analysis. More discussions and
      analyses about $\alpha$ will be also done in future work.
  \end{remark}

\subsubsection{Time step length}
As will be proved in Section \ref{sec:ap_analysis}, when $\epsilon$
goes to zero, the numerical scheme \eqref{eq:first_order} with
\eqref{eq:flux} will converge to an explicit scheme of the nonlinear
diffusion equation \eqref{eq:limit}. Therefore, the time step
length is set as
\begin{equation}
  \label{eq:time}
  \Delta t = \max\{C \epsilon \Delta x /c, C \sigma_{\rm min}\Delta
  x^2 / c  \}, 
\end{equation}
where $\sigma_{\rm min}$ is the minimum value of $\sigma(\bx)$ all
over the computation domain, and $C$ is the CFL number. Here, it
always requires that $C < 1$.

Following the method in \cite{Peng2020}, we will discuss the stability
of the numerical scheme \eqref{eq:first_order} and \eqref{eq:flux}
with the time step length \eqref{eq:time} for the linear system
\eqref{eq:linear_RTE}. Numerical experiments indicate that this choice
may also work when the method is applied to more general models, such
as the gray approximation to the radiative transfer equations
\eqref{eq:RTE}.  Let $P_1$ system as an example. The first-order
system for the $P_1$ system is reduced into
  \begin{equation}
    \label{eq:P1}
    \begin{aligned}
      \epsilon^2 \frac{I_{0,j}^{n+1} - I_{0,j}^n}{\Delta t} & +
      \epsilon \frac{I_{1, j+1}^{n} - I_{1, j-1}^{n}}{2 \Delta x} -
      \frac{\alpha \epsilon }{2} \frac{I_{0, j+1}^n - 2 I_{0,j}^n +
        I_{0, j-1}^n}{\Delta x}  = 0,  \\
      \epsilon^2 \frac{I_{1,j}^{n+1} - I_{1,j}^n}{\Delta t} & +
      \frac{\epsilon}{3} \frac{I_{0, j+1}^{n+1} - I_{0, j-1}^{n+1}}{2
        \Delta x} - \frac{\alpha \epsilon }{2} \frac{I_{1, j+1}^n - 2
        I_{1,j}^n +
        I_{1, j-1}^n}{\Delta x}  = - I_{1,j}^{n+1}.
  \end{aligned}
\end{equation}
We follow the definition of stability as in \cite{Peng2020} here. To
carry out the Fourier analysis, assuming the mesh is uniform and the
periodic boundary condition is imposed, let the numerical solutions be
\begin{equation}
  \label{eq:Fourier_scheme}
  (I_0)_j^n = \hat{I}_0^n \exp(\imag k j h),  \qquad (I_1)_j^n =
  \hat{I}_1^n \exp(\imag k j h),\qquad j = 0, \cdots, N-1, \qquad k \in \bbZ,
\end{equation}
where $j$ is the index in the $x-$axis and $k$ is the index for the
Fourier mode. Then, \eqref{eq:P1} will be reduced to
\begin{equation}
  \label{eq:fourier_mat}
  \left(
    \begin{array}[c]{c}
      \hat{I}_0^{n+1} \\ 
      \hat{I}_1^{n+1}
    \end{array}
  \right)  = {\bf C}(\epsilon, \alpha, \Delta t, \Delta x, \xi) \left(
    \begin{array}[c]{c}
      \hat{I}_0^{n} \\ 
      \hat{I}_1^{n}
    \end{array}
  \right)
\end{equation}
with
\begin{equation}
  \label{eq:fourier_g}
  {\bf C}(\epsilon, \alpha, \Delta t, \Delta x, \xi) = \left( \begin{array}[c]{cc}
                                                                \frac{\epsilon^2}{\Delta
                                                                t} &  0\\ 
                                                                \frac{\imag
                                                                \epsilon
                                                                \sin(\xi)}{
                                                                3
                                                                \Delta
                                                                x}
                                                                   &
                                                                     \frac{\epsilon^2}{\Delta t} + 1
    \end{array}\right)^{-1} \left( \begin{array}[c]{cc}
      \frac{\epsilon^2}{\Delta t} + \frac{\alpha \epsilon (\cos(\xi)
                                     -1)}{\Delta x} &
                                                               -\frac{
                                                     \imag \epsilon
                                                      \sin(\xi)}{\Delta
                                                      x   } \\
                                     0 &  \frac{\epsilon^2 }{\Delta t}
                                         + \frac{\alpha \epsilon
                                         (\cos(\xi) -1)}{\Delta x}
    \end{array}\right), \qquad \xi = k \Delta h \in [0, 2\pi].
\end{equation}
As is stated in \cite{Peng2020}, the numerical scheme \eqref{eq:P1} is stable, if it satisfies $\forall \xi \in [0, 2\pi],$
\begin{enumerate}
\label{eq:linear_stability}
\item $\max\{|\lambda_1(\xi)|, |\lambda_2(\xi)|\} < 1$,
\item $\max\{|\lambda_1(\xi)|, |\lambda_2(\xi)|\} =1$ and $\bf C$ is real diagonalizable, 
\end{enumerate}
where $\lambda_i, i = 1, 2$ are the eigenvalues of $\bf C$.  As is
stated in \cite{Peng2020}, the stability is a necessary condition for
the standard $L^2$ energy to be non-increasing.

\begin{proposition}
  \label{thm:fourier}
  The numerical scheme \eqref{eq:P1} for the linear system
  \eqref{eq:linear_RTE} satisfies the conditions
  \eqref{eq:linear_stability}, and is then stable.
\end{proposition}

This proposition is proved in Appendix \ref{app:fourier}, and can be
extended to the general $P_N$ system.

 \subsubsection{Algorithm}
 \label{sec:algorithm}
 Based on all the discussions above,  the algorithm will be summarized as below.
 \begin{enumerate}
 \item Given $\bbI_{i,j}^n$ and $T_{i, j}^n$ at time step $n$;
 \item Update the specific intensity and temperature according to the
   IMEX scheme \eqref{eq:high_order}, \eqref{eq:high_order_local} and
   \eqref{eq:flux}, which includes the two steps below at the internal
   stage $k$ of a RK step
  \begin{enumerate}
  \item Obtain $T_{i,j}^{k}$ and $(I_{0}^0)_{i,j}^{k}$ by solving the
    equations \eqref{eq:high_order_I0_local} and
    \eqref{eq:high_order_T_local};
  \item Calculate $(I_{l}^m)_{i,j}^{k}$ by \eqref{eq:high_order_I_local};
  \end{enumerate}
\item Go to 1 for the next step.
  \end{enumerate}

\section{Formal asymptotic property and stability analysis}
\label{sec:proof}
In this section, we will study the asymptotic property and the
stability for the proposed AP IMEX scheme.

\subsection{Formal asymptotic analysis}
\label{sec:ap_analysis}

The asymptotic preserving property is quite important for multi-scale
problems. In the realistic thermal radiative transfer problems, it is
not practical to resolve the mean-free path, which requires
prohibitively small grid cells. Therefore, the AP property is
required. It is expected that when holding the mesh size and time step
fixed, the AP scheme should automatically recover the discrete
diffusion solution when the mean free path goes to zero
\cite{sun2015asymptotic1, Klar1998, Larsen1987, Larsen1989,
  Mie2013}. For the radiative transfer problem \eqref{eq:RTE}, this is
to say that the numerical method could give a valid discretization of
the nonlinear diffusion equation \eqref{eq:limit} \cite{semi2008Ryan}.

We will examine the AP property of this numerical method in the asymptotic limit
away from boundary and initial layers, and the first-order numerical
scheme is studied here. The theorem below shows the AP property of
this method.

\begin{theorem}
  As the parameter $\epsilon$ goes to zero, the numerical scheme
  proposed in \ref{sec:algorithm} approaches an explicit five-point
  scheme for the nonlinear diffusion equation \eqref{eq:limit}.
\end{theorem}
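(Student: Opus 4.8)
The plan is to start from the fully discrete first-order scheme \eqref{eq:first_order} equipped with the Lax--Friedrichs flux \eqref{eq:flux}--\eqref{eq:flux_force}, insert the formal orders $(I_0^0)^n = \mathcal{O}(1)$, $(I_1^m)^n = \mathcal{O}(\epsilon)$, and $(I_l^m)^n = \mathcal{O}(\epsilon^l)$ established in \eqref{eq:order_I}, and then let $\epsilon \to 0$ equation by equation while holding $\Delta t$, $\Delta x$, and $\Delta z$ fixed. I would first expose the limiting relations at the level of the individual update formulas and only afterwards track how the central-difference spatial stencil assembles, since the two obstructions (order counting and spatial bookkeeping) are cleanly separable.

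First I would examine \eqref{eq:first_I0}. Because $(I_1^m)^n = \mathcal{O}(\epsilon)$, the convection term $\epsilon(\nabla_{\bx}G(I_1^m))^n$ is $\mathcal{O}(\epsilon^2)$, matching the time-derivative term $\tfrac{\epsilon^2}{c}\partial_t I_0^0$; balancing orders forces the right-hand side to vanish to leading order, which yields the discrete equilibrium $(I_0^0)^{n+1} \to ac(T^{n+1})^4$, the discrete analogue of $I^{(0)} = ac(T^{(0)})^4$. Next I would turn to \eqref{eq:first_I} with $l=1$: here the time derivative and the $G$-term are $\mathcal{O}(\epsilon^3)$, so the surviving $\mathcal{O}(\epsilon)$ balance reads $\epsilon(\nabla_{\bx}F(I_0^m))^{n+1} = -\sigma^{n+1}(I_1^m)^{n+1}$, i.e.\ the discrete Fick law $(I_1^m)^{n+1} = -\tfrac{\epsilon}{\sigma^{n+1}}(\nabla_{\bx}F(I_0^m))^{n+1}$, consistent with \eqref{eq:order_I1_leading}.

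The crucial spatial observation is that the Lax--Friedrichs penalty coefficient $\alpha_{i,j} = \exp(-\sigma_{i,j}/\epsilon^2)$ in \eqref{eq:alpha_0} decays to zero faster than any power of $\epsilon$, so the numerical flux \eqref{eq:flux_force} reduces to a pure central difference in the limit and no spurious $\mathcal{O}(1/\Delta x)$ numerical diffusion pollutes the diffusion regime. I would then substitute the discrete Fick law into the central-differenced diffusion term $\tfrac{1}{\epsilon}(\nabla_{\bx}G(I_1^m))^n$ of the temperature update \eqref{eq:first_T} and compute the composite coefficient from \eqref{eq:coe_moment}: the $z$-contribution carries $B_1^0 A_0^0 = \tfrac13$, while the $x$-contribution carries $\tfrac14\big(D_1^{-1}\mE_0^0 + F_1^1 C_0^0\big) = \tfrac13$, with the off-axis moments producing no mixed $x$--$z$ derivatives. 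This reproduces exactly the diffusion coefficient $\tfrac{ac}{3\sigma}$, and combined with $(I_0^0)=ac T^4$ together with $C_v\partial_t T = \partial_t U$, the temperature update collapses to an explicit five-point discretization of \eqref{eq:limit}, the diffusion operator being evaluated at level $n$ and involving only $T_{i,j}$ and its four neighbors $T_{i\pm1,j}$ and $T_{i,j\pm1}$.

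The main obstacle I anticipate is the spatial bookkeeping rather than the order counting: I must verify that composing the two central-differenced first-moment operators $F$ and $G$ genuinely produces a consistent five-point Laplacian with the correct $\tfrac13$ factor, and that the decay of $\alpha_{i,j}$ is fast enough that the residual Lax--Friedrichs term is inactive at $\mathcal{O}(1)$ yet still present at finite $\epsilon$. I would also check that the time lag (the explicit level $n$ in the $G$-term of \eqref{eq:first_T} against level $n+1$ elsewhere) does not obstruct the limit; since the equilibrium and Fick relations hold at every level, the lagged term converges to the same central-difference diffusion operator, confirming that the scheme reduces to the explicit five-point discretization of \eqref{eq:limit}.
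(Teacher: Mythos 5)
Your proposal follows essentially the same route as the paper's proof: extract the discrete equilibrium $(I_0^0)^{n+1}=ac(T^{n+1})^4+\mathcal{O}(\epsilon^2)$ from \eqref{eq:first_I0}, extract the discrete Fick law $(I_1^m)^{n+1}=-\tfrac{\epsilon}{\sigma^{n+1}}\big(\nabla_{\bx}F(I_0^m)\big)^{n+1}$ from \eqref{eq:first_I} with $l=1$, use $\alpha_{i,j}=\exp(-\sigma_{i,j}/\epsilon^2)\to 0$ from \eqref{eq:alpha_0} to kill the Lax--Friedrichs penalty, and substitute into \eqref{eq:first_T}. Your coefficient algebra is also correct and matches the paper's: $B_1^0A_0^0=\tfrac13$ for the $z$-part and $\tfrac14\big(D_1^{-1}\mE_0^0+F_1^1C_0^0\big)=\tfrac14\big(\tfrac23+\tfrac23\big)=\tfrac13$ for the $x$-part, with no mixed derivatives; the paper carries out the same computation with fully indexed formulas in \eqref{eq:AP_I1}--\eqref{eq:AP_Flux_T}, while you do it symbolically, but the content is identical, including your observation that the explicit time level $n$ in the $G$-term makes the limiting scheme explicit.

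There is, however, one concrete error, and it sits precisely in the ``spatial bookkeeping'' you flagged as the crux: the limiting scheme is \emph{not} supported on $T_{i,j}$ and its four nearest neighbors $T_{i\pm1,j}$, $T_{i,j\pm1}$. The flux \eqref{eq:flux_force} averages adjacent cell values, so each flux-divergence is a centered difference of width $2\Delta x$: by \eqref{eq:AP_I1}, $(I_1^m)_{i,j}$ involves $(I_0^0)_{i\pm1,j}$ and $(I_0^0)_{i,j\pm1}$, and differencing $(I_1^m)$ once more in \eqref{eq:reduced_T} makes the nearest-neighbor values of $I_0^0$ cancel from $\mF_{i+1/2,j}-\mF_{i-1/2,j}$. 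What survives is the wide stencil of the paper's \eqref{eq:AP_Flux_T}: the five points are $(i,j)$, $(i\pm2,j)$, $(i,j\pm2)$, with $1/\sigma$ evaluated at the staggered points $(i\pm1,j)$, $(i,j\pm1)$; the 1D discrete limit \eqref{eq:1D_limit} shows the same structure. The distinction is not cosmetic: the limiting scheme decouples into independent even/odd sub-grids, and the von Neumann analysis \eqref{eq:1D_limit_linear}--\eqref{eq:stability} that fixes the time-step restriction is performed on the $2\Delta x$ stencil, so asserting the compact stencil would give a quantitatively wrong stability constraint. If you carry out the substitution you describe, the cancellation forces you onto the wide five-point stencil; with that correction your argument coincides with the paper's proof.
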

\begin{proof}
  From \eqref{eq:order_l} and \eqref{eq:first_I}, it holds that
  \begin{equation}
    \label{eq:order_I1_1_1}
    -\sigma^{n+1} (I_1^m)^{n+1} \approx  \epsilon \Bigg(\nabla_{\bx} F(I_0^m)\Bigg)^{n+1} = 
    \epsilon \left(\frac{\partial}{2\partial x}\Big(
      -C_{0}^{m-1}I_{0}^{m-1} + \mE_{0}^{m+1} I_{0}^{m+1} \Big) +
      \pd{}{z} A_{0}^m I_{0}^m
    \right)^{n+1}, \qquad
    |m| \leqslant 1. 
  \end{equation}
  With the numerical flux \eqref{eq:flux}, we can derive the final
  approximation to $(I_1^m)_{i,j}^{n+1}$ at grid $(i,j)$. Precisely,
  the exact expression for $(I_1^m)_{i,j}^{n}, m = -1, 0, 1$ is
\begin{equation}
  \label{eq:AP_I1}
  \begin{aligned}
    (I_1^0)_{i,j}^{n} & = \frac{-\epsilon}{\sigma_{i,j}^{n}
    }\left(\sqrt{\frac{1}{3}} \frac{(I_{0}^0)_{i,j+1}^n -
        (I_0^0)_{i,j-1}^n}{\Delta z} + \frac{ \alpha_{i,j}^n
        }{2c }\frac{ \big((I_1^0)_{i,j+1}^{n}
        -    2(I_1^0)_{i,j}^n+(I_{1}^{0})_{i,j-1}^n)}{\Delta z} \right), \\
    (I_1^{-1})_{i,j}^{n} & = \frac{-\epsilon}{\sigma_{i,j}^{n}
    }\left(\sqrt{\frac{2}{3}} \frac{(I_0^0)_{i+1,j}^{n} -
        (I_0^0)_{i-1,j}^{n}}{2\Delta x} + \frac{\alpha_{i,j}^n}{2
        c}\frac{(I_{1}^{-1})_{i+1,j} ^{n} - 2
        (I_{1}^{-1})_{i,j}^{n} + (I_1^{-1})_{i-1, j}^{n} }{\Delta x} \right), \\
    (I_1^1)_{i,j}^{n} & = \frac{\epsilon}{\sigma_{i,j}^{n}
    }\left(\sqrt{\frac{2}{3}} \frac{(I_0^0)_{i+1,j}^{n} -
        (I_0^0)_{i-1,j}^{n}}{2\Delta x} + \frac{\alpha_{i,j}^n}{2
        c }\frac{(I_{1}^1)_{i,j}^{n} - 2 (I_1^1)_{i,j}^{n} +
       (I_1^1)_{i-1, j}^{n} }{\Delta x} \right).
\end{aligned}
\end{equation}
With the numerical flux \eqref{eq:flux}, the fully discrete form of
\eqref{eq:first_T} is reduced into
\begin{equation}
  \label{eq:reduced_T}
  C_{v} \frac{T_{i,j}^{n+1} - T_{i,j}^n}{\Delta t} +
  \frac{1}{c}\frac{(I_0^0)_{i,j}^{n+1} - (I_0^0)_{i,j}^n}{\Delta t}  +
  \frac{1}{\epsilon}   \left(\frac{\mF_{i+1/2,j}^n -
      \mF_{i-1/2, j}^n}{\Delta x} + \frac{\mG_{i,j+1/2}^n -
      \mG_{i,j-1/2}^{n}}{\Delta z}\right)  = 0,
\end{equation}
where
\begin{equation}
  \label{eq:AP_flux}
  \begin{aligned}
    &\mF_{i+1/2,j, g}^n = \frac{1}{4}\sqrt{\frac{2}{3}}
    \Big(\left((I_1^{-1})_{i,j}^{n} - (I_1^1)_{i,j}^{n}\right) +
    \left((I_1^{-1})_{i+1,j}^{n} - (I_1^1)_{i+1,j}^{n}\right) \Big) +
    \frac{\alpha_{i,j}^n }{4c}\Big((I_0^0)_{i+1,j}^{n} -
    (I_0^0)_{i,j}^n\Big), \\
    & \mG_{i,j+1/2, g}^n = \frac{1}{2\sqrt{3}} \Big((I_1^0)_{i,j}^{n}
    + (I_1^0)_{i,j+1}^{n} \Big)+ \frac{\alpha_{i,j}^n}{4c}\Big( (I_0^0)_{i,j+1}^{n} - (I_0^0)_{i,j}^n\Big).
  \end{aligned}
\end{equation}
From \eqref{eq:alpha_0}, we can obtain that 
\begin{equation}
  \label{eq:limit_alpha}
  \lim_{\epsilon \rightarrow 0} \alpha_{i,j}^n =  \lim_{\epsilon
    \rightarrow 0} \exp(-\sigma_{i,j}^n/\epsilon^2) = 0.  
\end{equation}
Substituting \eqref{eq:AP_I1} into \eqref{eq:AP_flux} and omitting the
higher-order term of $\epsilon$, it holds that
\begin{equation}
  \label{eq:AP_Flux_T}
  \begin{aligned}
    &\frac{1}{\epsilon} \left(\frac{\mF_{i+1/2,j, g}^n - \mF_{i-1/2,
          j, g}^n}{\Delta x} + \frac{\mG_{i,j+1/2,g}^n -
        \mG_{i,j-1/2,g}^{n}}{\Delta z}\right)  \\
    & = -\frac{1}{3} \left(\frac{ \frac{(I_0^0)_{i+2,j}^{n} - (I_0^0)_{i,j}^{n}
      }{2\Delta x\sigma_{i+1,j}^n} - \frac{ (I_0^0)_{i,j}^{n}
        -(I_0^0)_{i-2,j}^{n} }{2\Delta x\sigma_{i-1,j}^n}}{2\Delta
      x} + \frac{ \frac{(I_0^0)_{i,j+2}^{n} -(I_0^0)_{i,j}^{n} }{2\Delta
        z\sigma_{i, j+1}^n} - \frac{(I_0^0)_{i,j}^{n}
        -(I_0^0)_{i,j-2}^{n} }{2\Delta z\sigma_{i,j-1}^n}}{2\Delta
      z}\right).
 \end{aligned}  
\end{equation}
Together with \eqref{eq:AP_flux}, and \eqref{eq:AP_Flux_T}, we can
find that \eqref{eq:reduced_T} becomes a five-point scheme for the
nonlinear diffusion equation \eqref{eq:limit}, and this shows that the
current scheme for RTE \eqref{eq:RTE} is an AP scheme.
\end{proof}

For the 1D spatial problem, when $\epsilon$ goes to zero,
\eqref{eq:reduced_T} is reduced into
\begin{equation}
  \label{eq:1D_limit}
   C_{v} \frac{T_{i}^{n+1} - T_{i}^n}{\Delta t} +a
  \frac{(T_{i}^4)^{n+1} -
    (T_{i}^4)^n}{\Delta t}   =  \frac{ac}{3\sigma} \frac{ (T_{i+2}^4)^{n}
    -2(T_{i}^4)^{n}  + (T_{i-2}^4)^{n}}{4 \Delta x^2 }.
\end{equation}
The linear version of \eqref{eq:1D_limit} is
\begin{equation}
  \label{eq:1D_limit_linear}
  \frac{\phi_i^{n+1} -
    \phi_i^n}{\Delta t}   =  \frac{c}{3\sigma} \frac{ \phi_{i+2}^{n}
    -2\phi_{i}^{n}  + \phi_{i-2}^{n}}{4 \Delta x^2 }, \qquad \phi_i
  = T_i^4.
\end{equation}
Fourier analysis shows the stability condition for \eqref{eq:1D_limit_linear} is 
\begin{equation}
  \label{eq:stability}
  \frac{c \Delta t}{3 \sigma \Delta x^2}
  \leqslant  \frac{1}{\max| \cos(2 \Delta x) - 1|} = 1.
\end{equation} 

\subsection{Energy stability}
Energy stability is another important property for a new numerical
scheme. In this section, the general nonlinear stability of the
numerical scheme for the complete system will be discussed.

\subsubsection{Gray approximation of the radiative transfer equations}
Just to show the property of the numerical scheme, $P_N$ equations for
the gray approximation of the radiative transfer equations in a
one-dimensional planar geometry medium are studied. The exact form of
the gray approximation of the radiative transfer equations and the
corresponding $P_N$ equations are presented in Appendix
\ref{app:1D}. Without loss of generality, the opacity $\sigma(x, T)$
and heat capacity $C_{v}$ are all set as constant. The periodic
boundary condition is adopted in the spatial space. In this case, the
first-order scheme \eqref{eq:first_order} is reduced
into \begin{subequations}
  \label{eq:1D_first_order}
  \begin{align}
  \label{eq:1D_first_I0}
    & \frac{\epsilon^2}{c} \frac{\bbI_{i}^{n+1} - \bbI_{i}^n }{\Delta t}
      + \Gamma^{\rm low}(\bbI_{i}^{n+1})  + 
      \Gamma^{\rm up}(\bbI_{i}^n)  = -\sigma 
      \bbI_i^{n+1} +   a c  \sigma\left(T^4\right)_i^{n+1} e_1, \\
    \label{eq:1D_first_T}
    & \epsilon^2 C_{v} \frac{T_i^{n+1} - T_i^n}{\Delta t}
       + \frac{\epsilon^2}{c} \frac{I_{0,i}^{n+1} - I_{0, i}^n }{\Delta t}
      +   \Gamma^{\rm up}(\bbI_{i}^n)e_1
      =  0.
  \end{align}
\end{subequations}
where the numerical flux is defined as
\begin{equation}
  \label{eq:Gamma}
  \Gamma^s(U_i) = \frac{\mF^s(U_{i+1}, U_{i}) - \mF^s(U_i, U_{i-1}) }{\Delta
    x}, \qquad s = \rm up, ~low, 
\end{equation}
with $\mF^{s}(U_1, U_2)$ defined in \eqref{eq:flux_force}, $e_1$
defined in Appendix \ref{app:1D}, and the matrix $\bbA_x^s$ changed
into $\bB^s$ defined in Appendix \ref{app:1D}.

We will begin from the gray approximation of the radiative transfer
equations to establish the numerical stability analysis for
\eqref{eq:1D_first_order}. The proposition below shows the energy
inequality for this system as
\begin{proposition}
  \label{pro:1D_RTE_inequality}
  For the RTE \eqref{eq:1D_RTE} with constant opacity $\sigma$ and
  periodic boundary in the spatial space, the energy inequality
  holds
  \begin{equation}
    \label{eq:energy_stability}
    \frac{\epsilon^2}{2c} \pd{}{t} \int_{x\in L}\int_{-1}^{1} I^2
    \dd \mu \dd x + \frac{\epsilon^2 C_{v}}{5} \pd{}{t}\int_{x\in
      L} \frac{1}{2} ac T^5  \dd x \leqslant 0. 
\end{equation}
\end{proposition}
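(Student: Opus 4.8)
The plan is to establish \eqref{eq:energy_stability} directly from the continuous one-dimensional gray RTE system \eqref{eq:1D_RTE} by the classical energy method: multiply the intensity equation by $I$, multiply the temperature equation by a carefully chosen power of $T$, integrate over the angular variable $\mu\in[-1,1]$ and over $x\in L$, and then add the two resulting identities so that the transport and energy-exchange terms collapse into a manifestly non-positive quantity. Throughout I write $E=\int_{-1}^1 I\dd\mu$ for the zeroth angular moment.

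First I would take the $1$D transport equation for $I$, multiply it by $I$, and integrate over $\mu$ and $x$. The time derivative produces $\frac{\epsilon^2}{2c}\partial_t\int_{x\in L}\int_{-1}^1 I^2\dd\mu\dd x$, which is exactly the first term of \eqref{eq:energy_stability}. The transport term $\epsilon\int_{x\in L}\int_{-1}^1 \mu\,I\,\partial_x I\dd\mu\dd x=\frac{\epsilon}{2}\int_{-1}^1\mu\big(\int_{x\in L}\partial_x(I^2)\dd x\big)\dd\mu$ vanishes by the periodic boundary condition in $x$. What remains is the collision contribution $\sigma\int_{x\in L}\big(\frac{1}{2}acT^4 E-\int_{-1}^1 I^2\dd\mu\big)\dd x$.

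Next I would treat the temperature equation $\epsilon^2 C_{v}\partial_t T=\sigma(E-acT^4)$. The key observation is that $\partial_t T^5=5T^4\partial_t T$, so multiplying this equation by the factor $\frac{1}{2}acT^4$ and integrating over $x$ produces precisely $\frac{\epsilon^2 C_{v}}{5}\partial_t\int_{x\in L}\frac{1}{2}acT^5\dd x$ on the left — the second term of \eqref{eq:energy_stability} — and $\frac{1}{2}ac\sigma\int_{x\in L}T^4(E-acT^4)\dd x$ on the right. Choosing this multiplier is the one nonobvious step: it is forced by the need to generate the $T^5$ energy, and it is what makes the two right-hand sides compatible.

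Finally I would add the two identities. The left-hand sides combine to the left-hand side of \eqref{eq:energy_stability}, while the right-hand sides collect to $\sigma\int_{x\in L}\big(acT^4E-\int_{-1}^1 I^2\dd\mu-\frac{1}{2}(acT^4)^2\big)\dd x$. To close the argument I would invoke the Cauchy--Schwarz inequality $E^2=\big(\int_{-1}^1 I\dd\mu\big)^2\leq 2\int_{-1}^1 I^2\dd\mu$, hence $-\int_{-1}^1 I^2\dd\mu\leq-\frac{1}{2}E^2$, and then complete the square via the algebraic identity $acT^4E-\frac{1}{2}E^2-\frac{1}{2}(acT^4)^2=-\frac{1}{2}(E-acT^4)^2\leq 0$. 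This shows the total right-hand side is bounded above by $-\frac{\sigma}{2}\int_{x\in L}(E-acT^4)^2\dd x\leq 0$, which is \eqref{eq:energy_stability}. The main obstacle is not any individual estimate but recognizing the correct pairing — the multiplier $\frac{1}{2}acT^4$ together with the Cauchy--Schwarz bound on $\int_{-1}^1 I^2\dd\mu$ — that makes the exchange terms assemble into the perfect square $-\frac{1}{2}(E-acT^4)^2$.
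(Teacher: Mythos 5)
Your proof is correct and follows essentially the same route as the paper's own argument: multiply the intensity equation by $I$ and the temperature equation by $\tfrac{1}{2}acT^4$, integrate, use periodicity to kill the transport term, and combine the right-hand sides via Cauchy--Schwarz into the perfect square $-\tfrac{\sigma}{2}\int_{x\in L}\bigl(acT^4-\int_{-1}^1 I\dd\mu\bigr)^2\dd x\leqslant 0$. The only difference is presentational: you spell out the Cauchy--Schwarz bound $\bigl(\int_{-1}^1 I\dd\mu\bigr)^2\leqslant 2\int_{-1}^1 I^2\dd\mu$ explicitly, which the paper leaves implicit.
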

\begin{proof}
  Multiplying \eqref{eq:1D_RTE_I} with $I$ and taking integration over
  $\mu$ and $x$, with the periodic boundary condition, we can derive that
  \begin{equation}
    \label{eq:energy_I}
    \frac{\epsilon^2}{2c} \pd{}{t} \int_{x\in L}\int_{-1}^{1} I^2
    \dd \mu \dd x  =
    \sigma\int_{x\in L} \left( \frac{1}{2}a c   T^4 \int_{-1}^1   I \dd
      \mu- \int_{-1}^1 I^2 \dd \mu\right) \dd x.
  \end{equation}
  Multiplying \eqref{eq:1D_RTE_T} with $\frac{ac}{2} T^4$ and
  integrating over $x$, it holds that 
\begin{equation}
  \label{eq:energy_T}
  \frac{\epsilon^2 C_{v}}{5} \pd{}{t}\int_{x\in L} \frac{1}{2} ac T^5  \dd x = \sigma
  \int_{x\in L} \left( \frac{1}{2} ac T^4\int_{-1}^1 I \dd \mu  - \frac{1}{2} (ac
    T^4)^2  \right) \dd x. 
\end{equation}
Together with \eqref{eq:energy_I} and \eqref{eq:energy_T}, it holds
with Cauchy-Schwarz inequality that
\begin{equation}
  \label{eq:energy_inequ}
  \begin{aligned}
    & \frac{\epsilon^2}{2c} \pd{}{t} \int_{x\in L}\int_{-1}^{1} I^2
    \dd \mu \dd x + \frac{\epsilon^2 C_{v}}{5} \pd{}{t}\int_{x\in
      L} \frac{1}{2} ac T^5 \dd x\\
    & \qquad = \sigma \int_{x\in L} \left(ac T^4\int_{-1}^1 I \dd \mu
      - \frac{1}{2} (ac
      T^4)^2- \int_{-1}^1 I^2 \dd \mu  \right) \dd x \\
    &\qquad \leqslant \frac{-\sigma}{2} \int_{x\in L} \left(ac T^4 -
      \int_{-1}^1 I \dd \mu \right)^2 \dd x \leqslant 0.
\end{aligned}
\end{equation}
Then we finish the proof of Proposition \ref{pro:1D_RTE_inequality}.
\end{proof}

Based on the energy inequality for the continuous equations, the
stability result for the first-order scheme \eqref{eq:1D_first_order}
is listed in the theorem below.
\begin{theorem}
  \label{thm:discrete_energy}
Following \cite{jang2014analysis}, define the discrete energy as
  \begin{equation}
    \label{eq:dis_energy}
    E(t^{n+1}) =   \sum_{i}\left[\frac{\epsilon^2}{2c \Delta t}
      \left(\left(I_{0,i}^{n+1}\right)^2 + \sum_{l=1}^M(2l
        +1)\left(I_{l,i}^{n}\right)^2 \right) + \epsilon^2 C_{v}
     \frac{ (T_i^5)^{n+1}}{5 \Delta t}\right]. 
  \end{equation}
  Then, for periodic boundary conditions, the following stability
  result holds for the first-order AP scheme defined in (4.7),
  \begin{equation}
    \label{eq:discrete_energy}
    E(t^{n+1}) - E(t^n) \leqslant 0
  \end{equation}
 with the time step length \eqref{eq:time}.

\end{theorem}

Due to the tedious process, the proof is put in Appendix
\ref{app:energy_proof}.


\section{Numerical results}
\label{sec:num}
In this section, several numerical simulations for the radiative
transfer equations in spatially 1D and 2D cases are studied to
validate the efficiency of this numerical method. We have implemented
the first-order \eqref{eq:first_order} and third-order IMEX RK scheme
\eqref{eq:high_order} to approximate RTE. In all 1D numerical tests,
the CFL number is set as $C = 0.4$. For 2D test cases, the CFL number
is set as $C = 0.1$. Periodic and inflow boundary conditions are
implemented, the details of which are presented in Appendix
\ref{app:boundary_condition}.

\subsection{The AP property}
This example is designed to test the AP property and the order of
accuracy of this numerical method for the first-order scheme
\eqref{eq:first_order} and the higher-order scheme
\eqref{eq:high_order}.  The test starts with an equilibrium initial
data
\begin{equation}
  \label{eq:initial_ex1}
  T =  (3 + \sin(\pi  x) ) / 4, \qquad I = ac T^4, \qquad x \in L. 
\end{equation}
The computation region is set as $L = [0, 2]$ with the periodic
boundary condition imposed on both ends. The parameters are set as
$a = c = 1.0$, $C_{v} = 0.1$ and $\sigma = 10$. Similar tests can be
found in the literature for the Boltzmann equation \cite{JinYan2013}.

\begin{figure}[!htb]
  \centering
  \includegraphics[width=0.4\textwidth]{./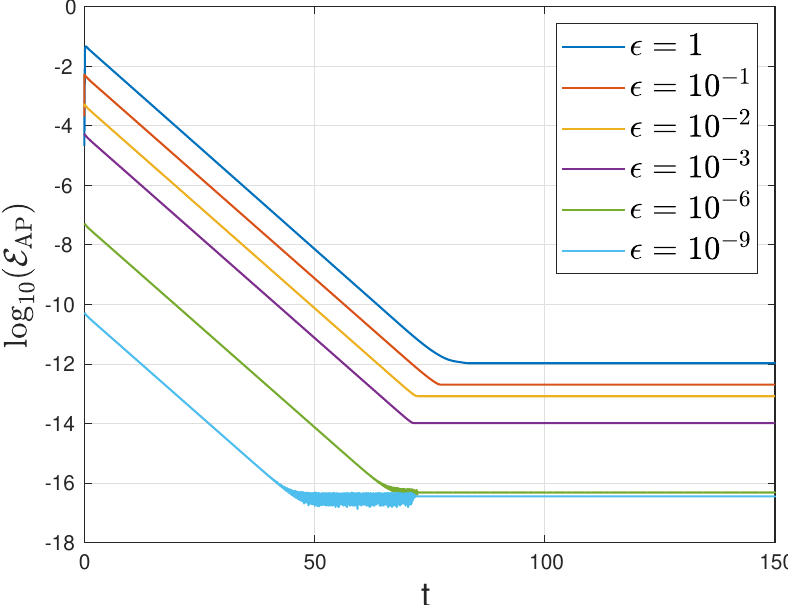}
  \caption{Time evolution of $\mE_{\rm AP}$ with different
    $\epsilon$. The $x$-axis is time $t$, and the $y$-axis is
      $\log_{10}(\mE_{\rm AP})$.}
  \label{fig:AP}
\end{figure}

In this test, the mesh size is $N = 100$ and the expansion order of
the $P_N$ method is $M = 7$. Since we are going to test the behavior
of the numerical scheme when $\epsilon$ goes to zero, the time step is
set as $\Delta t = {\rm C} \Delta x^2 / c$. Figure \ref{fig:AP} shows
the time evolution of $\mE_{\rm AP}$ as
\begin{equation}
  \label{eq:error_AP}
  \mE_{\rm AP} = \sqrt{\Delta x \sum_{i=1}^N \Big((I_{0,i} - a c T_{i}^4)^2 + \sum_{j=1}^M 
    I_{j,i}^2\Big)}, 
\end{equation}
for the numerical scheme \eqref{eq:first_order} with different
$\epsilon$.  We can see that for any $\epsilon$, $\mE_{\rm AP}$ is
decreasing with time and then reaches a final steady state.  With the
decreasing of $\epsilon$, the final value of $\mE_{\rm AP}$ becomes
smaller, which shows the AP property of the numerical scheme.

Next, we test the order of the numerical scheme
\eqref{eq:first_order}. The initial data \eqref{eq:initial_ex1} with
the same parameters are applied. We compute the solutions with grid
size $N = 50, 100, 200, 400$ and $800$, respectively for
$\epsilon = 1, 0.1$ and $0.01$. The final time is $t = 0.5$, and the
numerical solution with $N = 1600$ is chosen as the reference
solution. The $l_2$ error between the numerical solution and the
reference solution with different $\epsilon$ is calculated. Figure
\ref{fig:order_error} shows the convergence order of the numerical
method for different $\epsilon$. It illustrates that for different
$\epsilon$, the scheme is the uniformly first-order, which also
validates the AP property of the numerical scheme.

\begin{figure}[!htb]
  \centering
  \subfloat[$I$]{
    \includegraphics[width=0.4\textwidth]{./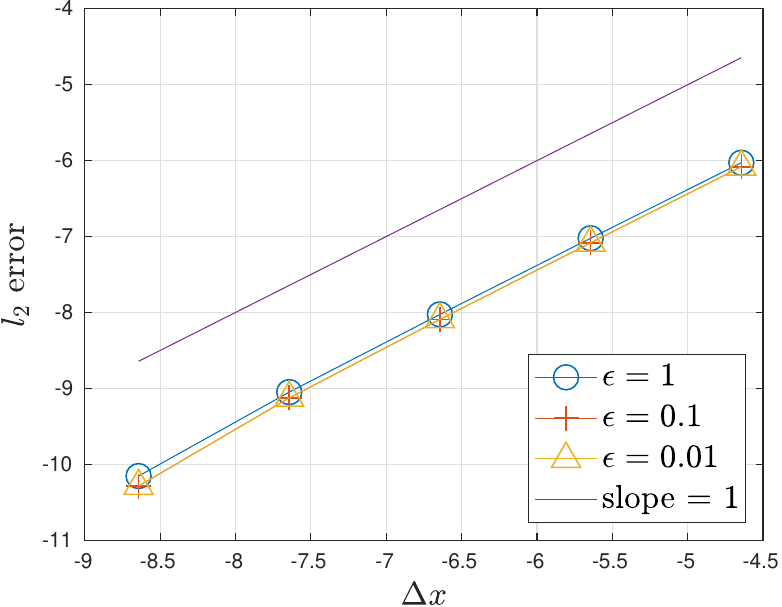}}
  \hfill
  \subfloat[$T$]{
    \includegraphics[width=0.4\textwidth]{./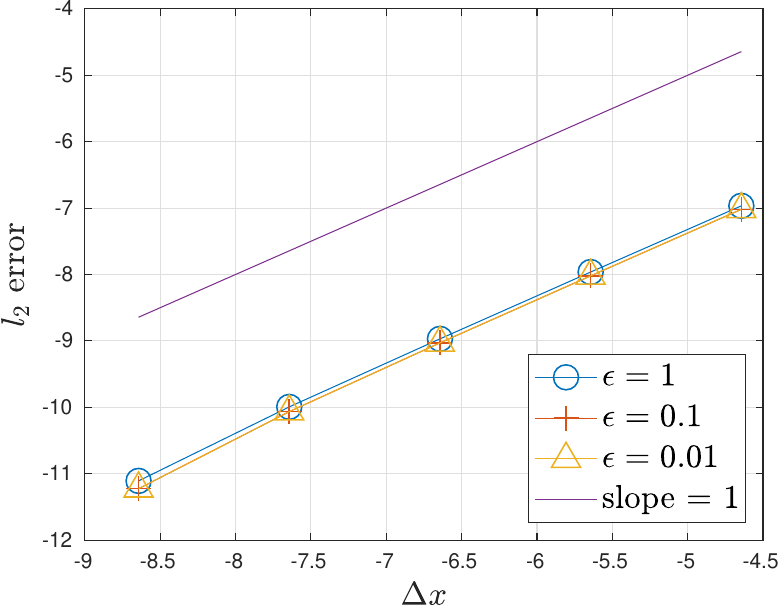}}
  \caption{$l_2$ error between the numerical solution with grid size
    $N = 50, 100, 200, 400$ and $800$ and the reference solution
    $N = 1600$.  (a) The $l_2$ error of the specific intensity
    $I$. (b) The $l_2$ error of the material temperature $T$.}
  \label{fig:order_error}
\end{figure}

To further verify the AP property of this numerical scheme, we redo
the test with higher-order scheme. First, the linear reconstruction
with IMEX3 scheme is utilized. The grid size is set as
$N = 100, 200, 400, 800$, respectively for
$\epsilon = 1, 0.1, 0.01, 10^{-6}$. The numerical solution with
$N = 1600$ is chosen as the reference solution. The final time is
$ t = 0.5$, and the $l_2$ error between the numerical solution and the
reference solution with different $\epsilon$ is calculated. Figure
\ref{fig:order_error_linear} shows the convergence order of the
numerical method for different $\epsilon$. It illustrates that for
different $\epsilon$, the scheme is the uniformly second-order.

\begin{figure}[!htb]
  \centering
  \subfloat[$I$]{
    \includegraphics[width=0.4\textwidth]{./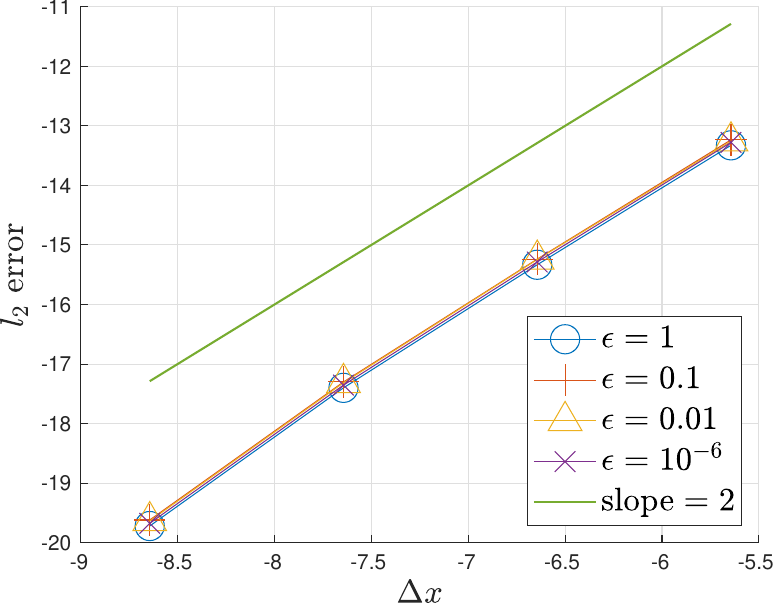}}
  \hfill
  \subfloat[$T$]{
    \includegraphics[width=0.4\textwidth]{./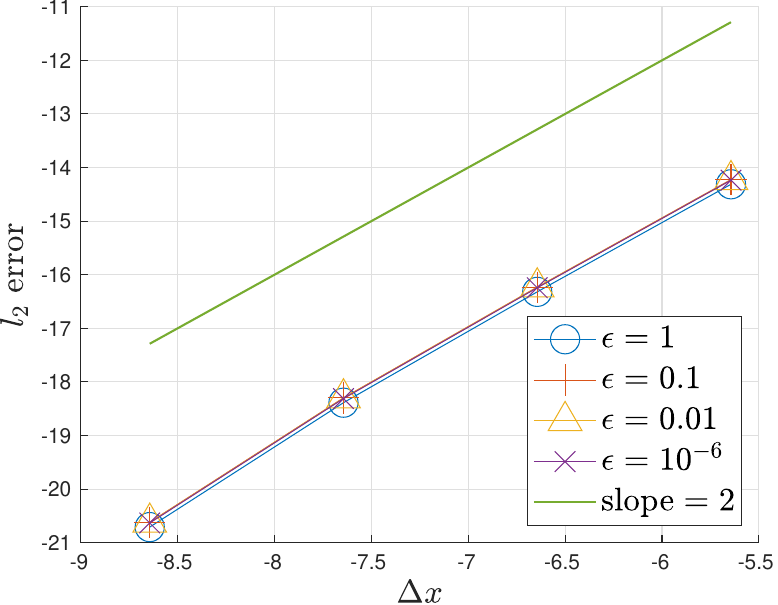}}
  \caption{$l_2$ error with linear reconstruction between the numerical solution with grid size
    $N = 100, 200, 400$ and $800$ and the reference solution is 
    $N = 1600$.  (a) The $l_2$ error of the specific intensity
    $I$. (b) The $l_2$ error of the material temperature $T$.}
  \label{fig:order_error_linear}
\end{figure}

With the same settings as above, we will test the convergence order of
the numerical method for different $\epsilon$ by IMEX3 scheme with the
third-order WENO reconstruction. The numerical results are shown in
Figure \ref{fig:order_error_weno}. We could see that the convergence
order of different $\epsilon$ is the same. However, it is only
second-order. We have studied the reason carefully, and a simple proof
is proposed in Appendix \ref{app:lossorder_proof}.
  
\begin{figure}[!htb]
	\centering
	\subfloat[$I$]{
		\includegraphics[width=0.4\textwidth]{./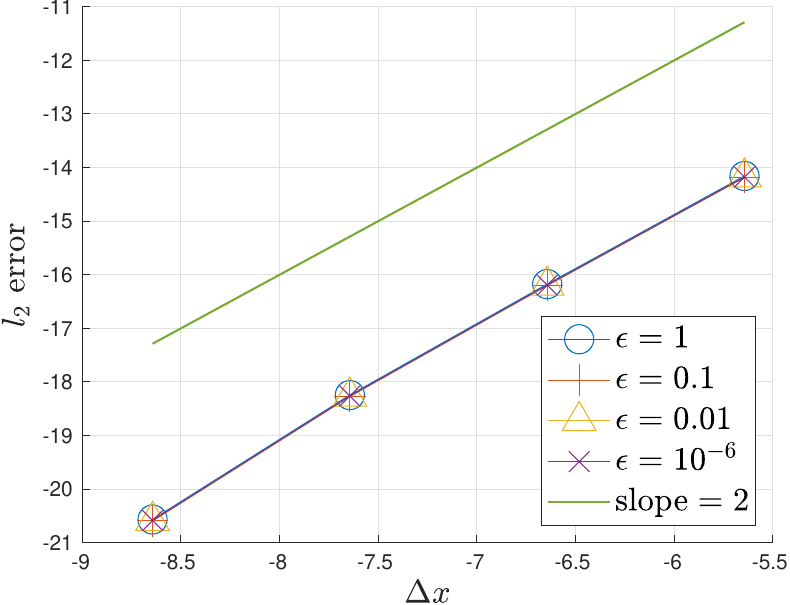}}
	\hfill
	\subfloat[$T$]{
		\includegraphics[width=0.4\textwidth]{./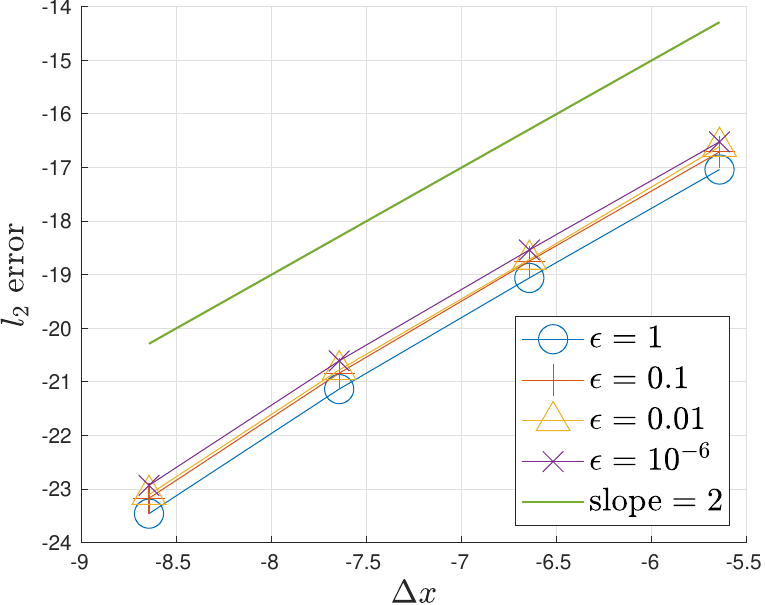}}
	\caption{$l_2$ error with third-order WENO reconstruction between the numerical solution with grid size
		$N = 100, 200, 400$ and $800$ and the reference solution
		$N = 1600$.  (a) The $l_2$ error of the specific intensity
		$I$. (b) The $l_2$ error of the material temperature $T$.}
	\label{fig:order_error_weno}
\end{figure}

To record the evolution of the temperature $T$ with $\epsilon$, the
evolution of $T$ for different $\epsilon$ is plotted in Figure
\ref{fig:T_temp}, where two positions $x = 0.505$ and $1.005$ are
recorded. Here, the grid size is $N = 200$, and
$\epsilon = 1, 0.5, 10^{-2}, 10^{-6}$. The evolution of the
temperature $T$ of the nonlinear diffusion equation \eqref{eq:limit}
is also plotted. From it, we can see that the temperature $T$ is
converging to the solution of the nonlinear diffusion equation as
$\epsilon$ approaches zero. The behavior of $T$ also validates the
stability of the numerical scheme when $\epsilon$ goes to zero.
 
\begin{figure}[!htb]
	\centering
	\subfloat[$x = 0.505$]{
		\includegraphics[width=0.4\textwidth]{./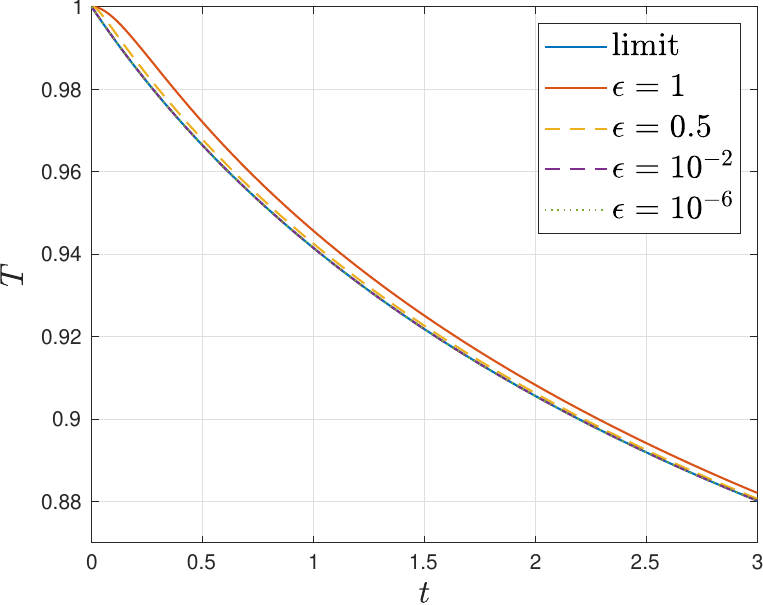}}
	\hfill
	\subfloat[$x = 1.005$]{
		\includegraphics[width=0.4\textwidth]{./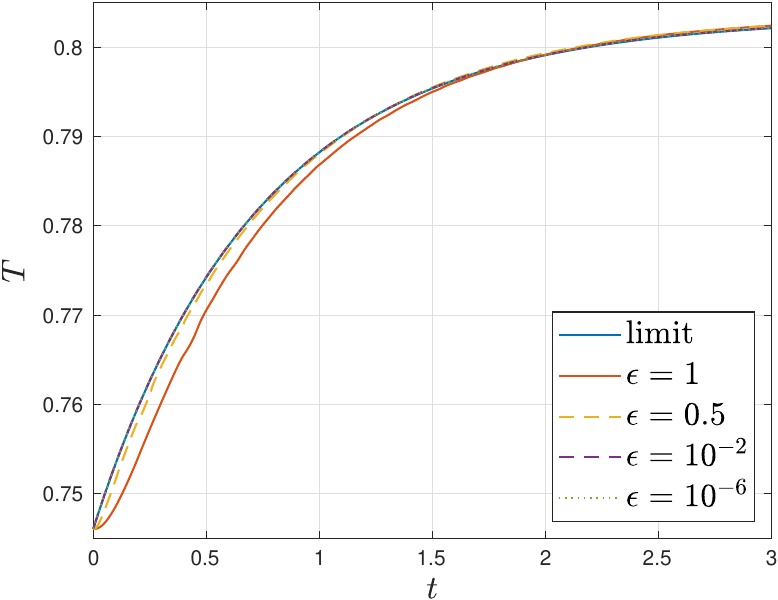}}
      \caption{The evolution of temperature $T$ with time increasing
        for different $\epsilon$. (a) The value of $T$ at $x =
        0.505$. (b) The value of $T$ at $x = 1.005$.}
	\label{fig:T_temp}
\end{figure}

\subsection{Marshak wave problems}
In the following examples, the classical Marshak wave problems are
tested. The Marshak problem is one of the benchmark problems and is
also studied in the literature such as \cite{sun2015asymptotic1,
  Larsen2013, semi2008Ryan}.  In the computations, the parameters are
chosen the same as that in \cite{sun2015asymptotic1} with
$a = 0.01372 \rm{GJ/cm^3-ke V^4}$ and $c = 29.98 \rm{cm / ns}$.  In
this section, two absorption/emission coefficients are tested. For
both cases, the inflow boundary condition is imposed on both the left
and right sides, where the Marshak type boundary condition
\cite{semi2008Ryan} is utilized. The details of the Marshak type
boundary are proposed in Appendix \ref{app:boundary_condition}.

\paragraph{Marshak Wave-2B}

\begin{figure}[!htb]
  \centering \subfloat[material temperature]{
    \includegraphics[width=0.4\textwidth]{./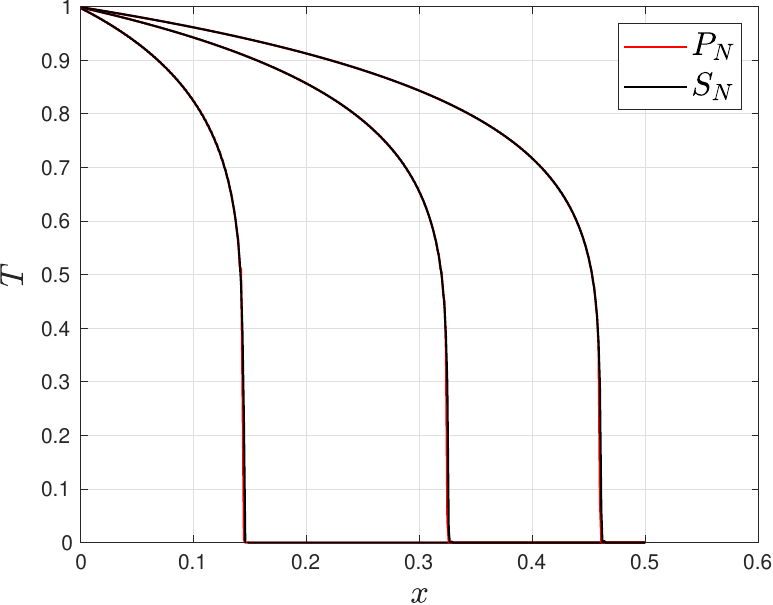}
    \label{fig:2B_1}}
  \hfill \subfloat[material temperature]{
    \includegraphics[width=0.4\textwidth]{./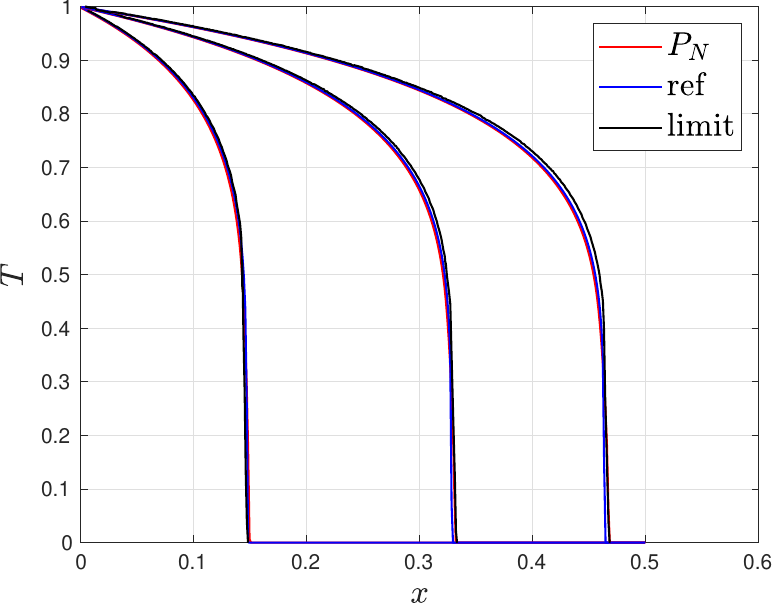}
    \label{fig:2B_2}}
  \caption{The material temperature $T$ of Marshak Wave-2B problem at time
    $t = 10, 50$ and $100$. The left picture is the $P_N$ solution and
    the reference solution get by $S_N$ method. The right picture is
    the $P_N$ solution and the reference solution in
    \cite{semi2008Ryan} and the black line is that to  the diffusion
    limit. }
  \label{fig:2B}
\end{figure}

In this example, we take the absorption/emission coefficient to be
$\sigma = 100/ {\rm T^3 cm^2/g}$, the density to be $3.0 \rm {g/cm^3}$
and the specific heat to be $0.1 \rm {GJ/g/keV}$.  The initial
material temperature $T$ is set to be $10^{-6} {\rm keV}$. A constant
isotropic incident specific intensity with a Planckian distribution at
$1$ keV is kept on the left boundary. The computation domian is 
$[0, \infty)$ but taken to be $L = [0, 0.2]$ in the simulations. In
this case, $\sigma$ is large enough that the solution to RTE is almost
the same as that of the diffusion limit \eqref{eq:limit}.

In the test, the expansion order of $P_N$ is set as $M = 11$ with the
grid size $N = 400$. The third-order IMEX RK scheme
\eqref{eq:high_order} is applied here, where the time step is set
as 
  \begin{equation}
    \label{eq:dt}
    \Delta t =  C \Delta x / c.    
  \end{equation}
  In Figure \ref{fig:2B}, the numerical results of the radiation wave
  front at time $t = 10, 50$ and $100$ are plotted.  In Figure
  \ref{fig:2B_1}, the reference is obtained by the $S_N$ method, and
  in Figure \ref{fig:2B_2}, the reference solution is from
  \cite{semi2008Ryan} and the diffusion limit result is produced by
  the finite difference method. From Figure \ref{fig:2B}, we can find
  that the numerical solution to RTE is on top of each other with that
  of the reference solution and the diffusion limit results, which is
  also consistent with the expectation that the solution to RTE is
  almost the same as the diffusion limit.

\paragraph{Marshak Wave-2A}
Marshak Wave-2A problem is quite similar to Marshak Wave-2B problem,
except that its absorption/emission coefficient is
$\sigma = 10 {\rm /T^3 cm^2/g}$. In this case, since $\sigma$ is not
large enough, the solution to RTE is different from that of the
diffusion limit.

In this test, the same numerical setting as Marshak wave-2B problem is
chosen.  In Figure \ref{fig:2A_1}, the computed radiation wave front
at time $t = 0.2, 0.4, 0.6, 0.8$ and $1.0$ are given and the reference
is obtained from the $S_N$ method.  Figure \ref{fig:2A_2} presents the
computed material temperature for both the gray approximation to the
radiation transfer equations and the nonlinear diffusion equation at
time $t = 1$, where the reference solution is from
\cite{mila2000}. From it, we can see that the numerical solution
matches the reference solution well, but is quite different from the
diffusion limit.

\begin{figure}[!htb]
  \centering
  \subfloat[material temperature]{
    \label{fig:2A_1}
    \includegraphics[width=0.4\textwidth]{./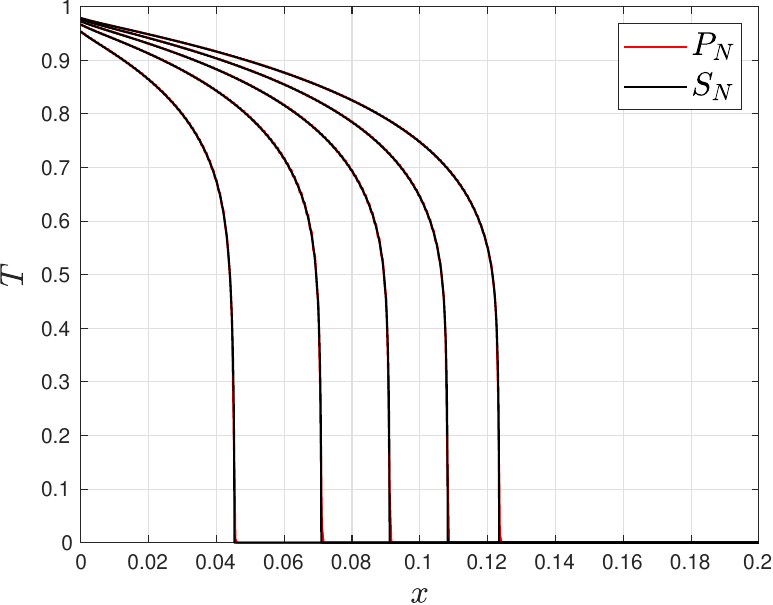}}
  \hfill
  \subfloat[material temperature]{
    \label{fig:2A_2}
    \includegraphics[width=0.4\textwidth]{./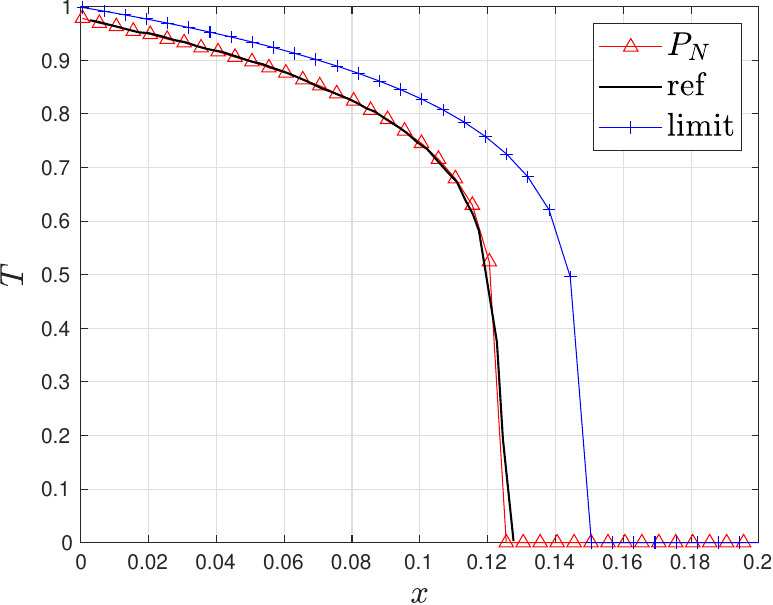}}
  \caption{The material temperature $T$ of Marshak Wave-2A problem at
      different time. (a) The material temperature $T$ of Marshak Wave-2A
      problem at $t = 0.2, 0.4, 0.6, 0.8$ and $1$. The black 
      line is the reference solution obtained by $S_N$ method. (b) The
      material temperature $T$ of Marshak Wave-2A problem at $t = 1$, where
      the red line is the numerical solution to RTE, the black line is
      the reference solution and the blue line is that to diffusion
      limit. }
\end{figure}

From the numerical results of Marshak wave problems, we can find that
the new numerical scheme works well both for the optically thick and
thin problems. The time step length is independent of the absorption
coefficients $\sigma$, which shows the high efficiency of this AP
numerical scheme.

\subsection{A lattice problem}
\begin{figure}[!htb]
  \centering
  \includegraphics[width=0.4\textwidth]{./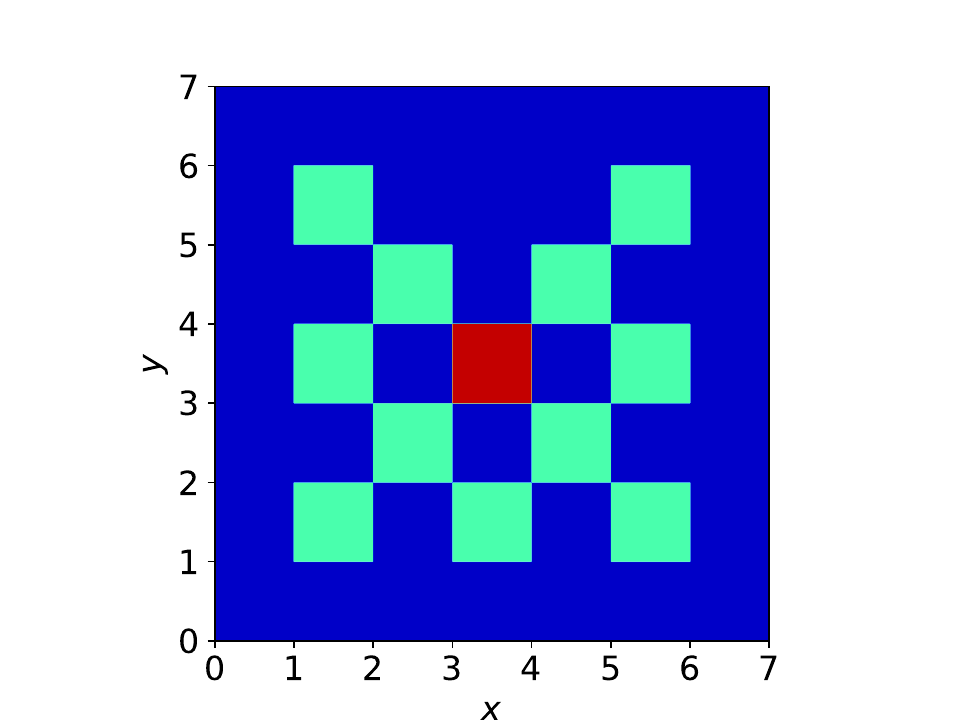}
  \caption{Layout of the lattice problem. }
  \label{fig:lattice_mesh}
\end{figure}

In this section, we study a two-dimensional problem with the added
complication of multiple materials but without radiation-material
coupling. We consider the transfer equation 
\begin{equation}
  \dfrac{\epsilon^2}{c}\pd{I}{t} + \epsilon \bsOmega \cdot \nabla I =
- \sigma_a I + \sigma_s \left(\dfrac{1}{4\pi}\int_{\bbS^2} I \dd
\bsOmega - I\right) + \epsilon^2 G.  
\end{equation} 
Photons are absorbed with a power density of
$\dfrac{c \sigma_a}{\epsilon^2} I$.  As there is no radiation-material
coupling, the photons are simply removed from the system when the
absorption occurs. The isotropic scattering term does not change the
radiation temperature but causes the specific intensity $I$ to become
more evenly distributed in microscopic velocity. The computation
domain is $[0, 7] \times [0, 7]$. It consists of a set of squares
belonging to a strongly absorbing medium in a background of weakly
scattering medium.  The specific layout of the problem is given in
Figure \ref{fig:lattice_mesh}, where the blue regions and the dark red
region are purely scattering medium with $\sigma_s = 1$ and
$\sigma_a = 0$; the light green regions contain purely absorbing
material with $\sigma_s = 0$ and $\sigma_a = 10$. In the dark red
region, there is an isotropic source $G = \dfrac{1}{4\pi}$, and $G$ is
zero elsewhere.  Initially the specific intensity is at equilibrium
and the radiation temperature is $10^{-6}$. Vacuum boundary conditions
are imposed on all four sides of the computation domain, which means
there is an outflow of radiation but no inflow. The detailed
  application of the inflow boundary is proposed in Appendix
  \ref{app:boundary_condition}, and we also refer to \cite{semi2008Ryan}
  for more details.  Other parameters are set as
$c = a = \epsilon = 1$.

We use a mesh of $280 \times 280$ in the spatial space and $P_5$ is
adopted here. Moreover, the filtering technique is applied in the
microscopic velocity space to avoid negative energy density solution.
Filtering techniques as proposed in \cite{mcclarren2010robust} are
employed in 2D simulations to suppress spurious oscillations in $P_N$
solutions. The filtering applied here is only applied to
$l \geqslant 2 M / 3$, where $M$ is the highest order of spherical
harmonic expansion \cite{hou2007computing, Filter2017}. For these $l$,
before updating  each time step, we substitute $I^m_l$ with
$\hat{I}^m_l$. Precisely
\begin{equation}
  \hat{I}^m_l = \dfrac{I^m_l}{1 + \alpha l^2 (l+1)^2},
\end{equation}
where
\begin{equation}\label{eq:alpha}
  \alpha = \dfrac{\omega}{M^2}\dfrac{1}{[(\sigma_a + \sigma_s)L +
    M]^2},\qquad
  \omega = \dfrac{2 c \Delta t}{\Delta x},
\end{equation}
with $L$ the characteristic length of the problem, which is taken to
be $L=1$ for all the simulations. $\sigma_a$ and $\sigma_s$ are the
absorption and scattering coefficients, respectively. 

\begin{figure}[htbp]
  \centering
  \subfloat[$\log_{10}I^0_0$ by AP IMEX.]{
    \label{fig:lattice_ap}
    \includegraphics[width=0.3\textwidth]{./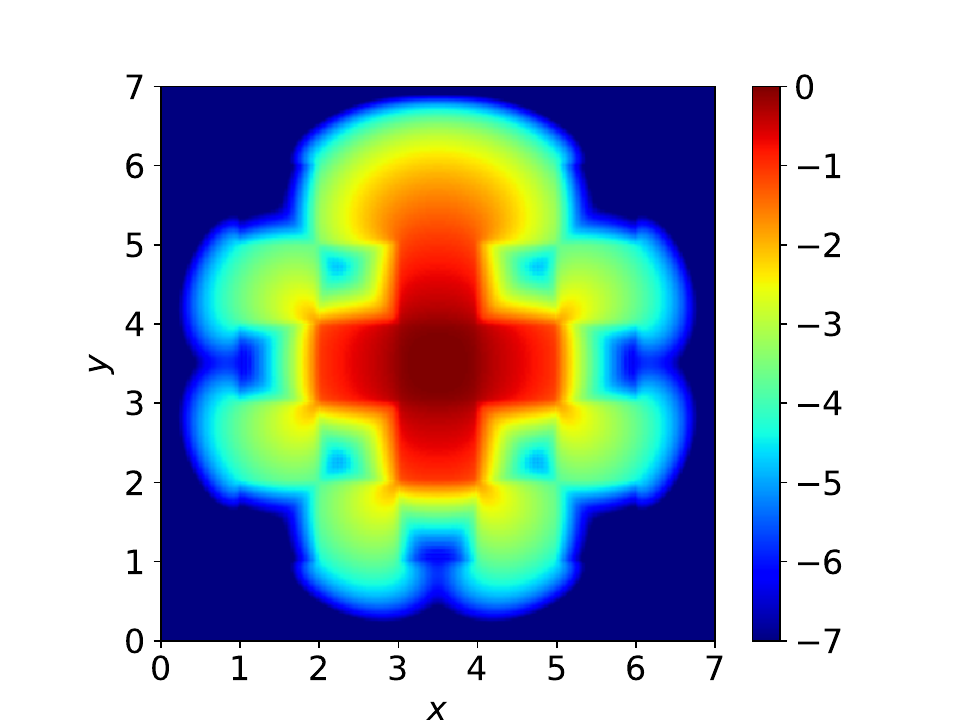}}
  \hfill
  \subfloat[$\log_{10}I^0_0$ by StaRMAP.]{
    \label{fig:lattice_starmap}
    \includegraphics[width=0.3\textwidth]{./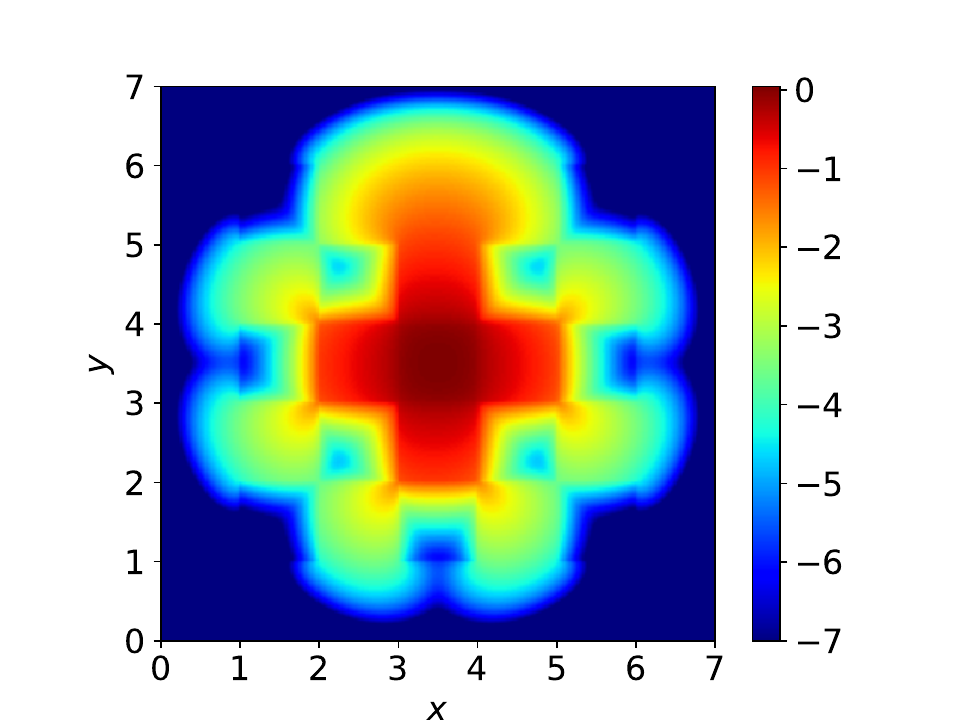}}
  \hfill \subfloat[$\log_{10}I^0_0$ at $x=3.5$]{
    \label{fig:lattice_cut}
    \includegraphics[width=0.30\textwidth]{./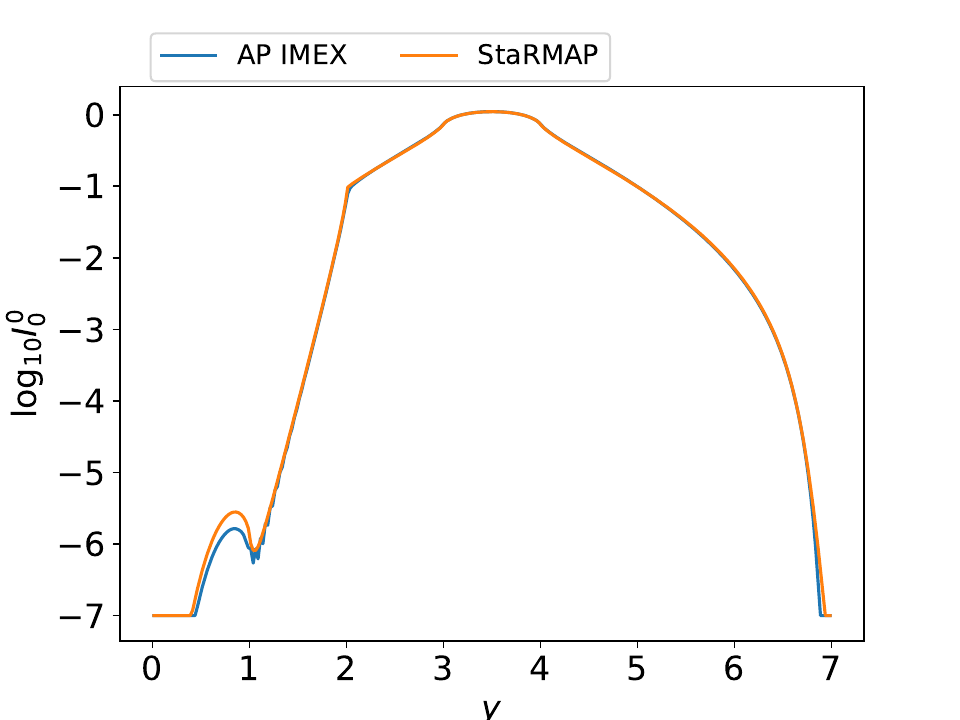}}
  \caption{The contour and slice plot of $\log_{10}I^0_0$ for the
    lattice problem at $t = 3.2$. (a) Contour plot of the AP numerical
    scheme. (b) The contour plot of the reference solution reproduced
    using StaRMAP.  (c) Comparison between the numerical solution and
    the reference solution reproduced by StaRMAP at $x = 3.5$.}
    \label{fig:lattice}
\end{figure}

In the test, the first-order scheme \eqref{eq:first_order} is utilized
for temporal discretization and a third-order WENO reconstruction is
adopted in spatial discretization. The results at time $t = 3.2$ are
shown in Figure \ref{fig:lattice} with the logarithm of $I_0^0$ to the
base 10 shown in contour and slice. The reference solution is obtained
using StarRMAP \cite{seibold2012starmap, seibold2014starmap}. Figure
\ref{fig:lattice} show that both solutions agree with each other quite
well, and the beams of the particles leaking between the corners of
the absorbing regions are all well produced. This phenomenon is also
studied in \cite{brunner2002forms, seibold2014starmap}, and the
behavior of the numerical results are almost the same.

\subsection{A hohlraum problem}
\begin{figure}[!htb]
  \centering
  \includegraphics[width=0.35\textwidth]{./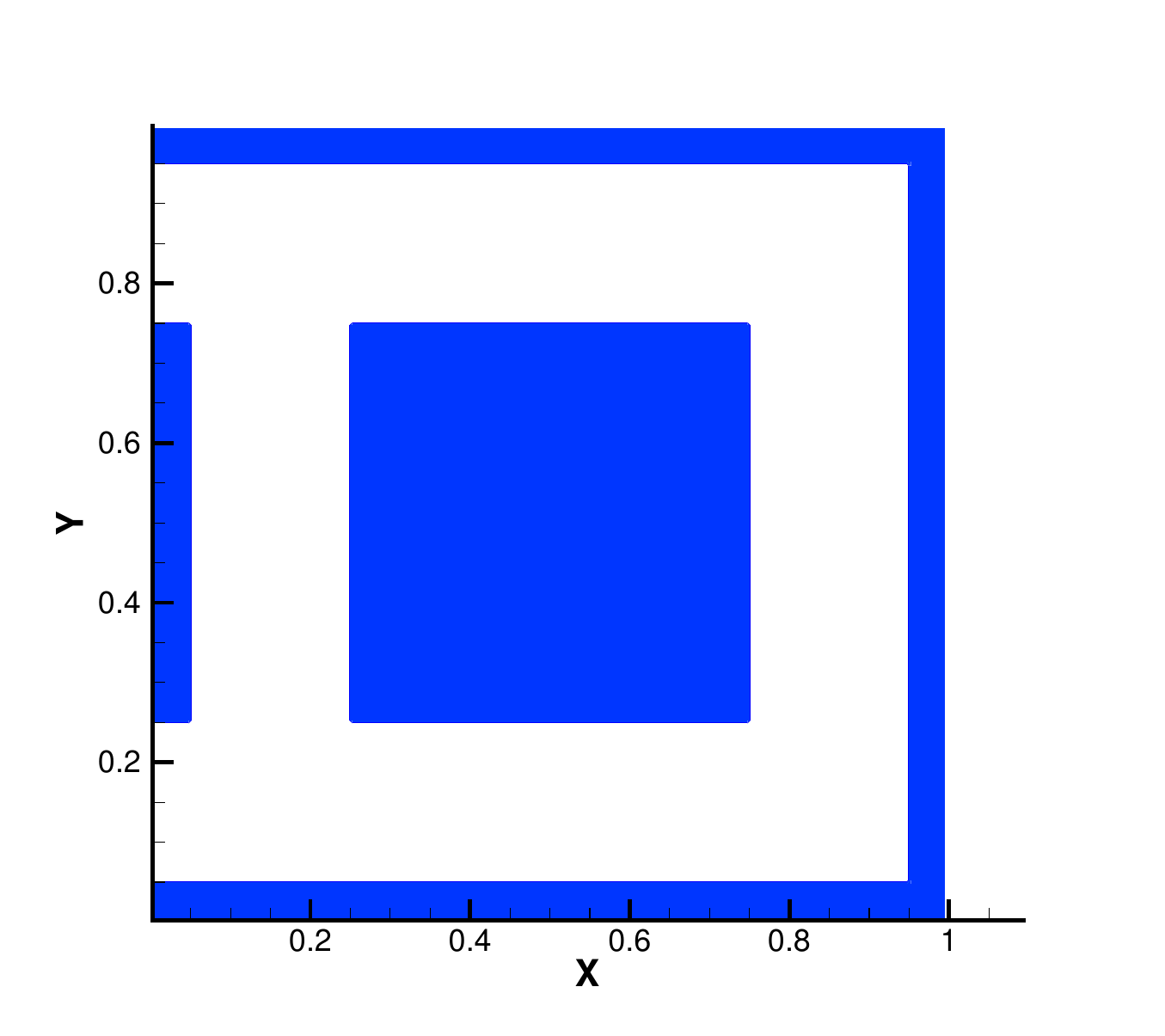}
  \caption{Layout of the hohlraum problem. The blue regions are where
  $(x,y) \in [0, 0.05] \times [0.25,0.75]$, and $(x,y) \in [0.25,
  0.75]\times[0.25, 0.75]$, $(x,y) \in [0,1]\times[0,0.05]$, $(x,y)
  \in[0,1]\times[0.95,1]$ and $(x,y) \in [0.95,1] \times [0,1]$.}
  \label{fig:hohlraum_coeff}
\end{figure}

This section studies the hohlraum problem, which is similar to that in
\cite{mcclarren2010robust}. For this problem, the radiation field is
coupled with the material energy. It is well known that the diffusion
approximation fails to capture the correct physics of this problem
\cite{brunner2002forms, mcclarren2010robust}, making it necessary to
simulate the original RTE \eqref{eq:RTE}.  Moreover, in this problem,
the material is initially cold and optically thick, and then becomes
optically thinner as radiation heats it up. The wide range in optical
depth presents a challenge to the numerical schemes. The layout of the
problem is shown in \Cref{fig:hohlraum_coeff}.  The computation domain
is $[0,1]\times[0,1]$, where the white areas are vacuum with
$\sigma_a = 0$. The blue regions in \Cref{fig:hohlraum_coeff} satisfy
$\sigma_a = 100 {\rm /T^3 cm^2/g}$, while the density is
$1.0 \rm {g/cm^3}$ and the heat capacity $C_v$ is
$0.3 \rm {GJ/g/keV}$.  An isotropic inflow of $1$ keV black body
source is incident on the entire left boundary. For the boundary
conditions, the Marshak type inflow boundary condition is applied. For
the left boundary there is an isotropic inflow, and for other
boundaries, the outside is treated as vacuum. Therefore, there is an
outflow of radiation but no inflow in other three boundaries. The
details of the Marshak type inflow are also proposed in Appendix
\ref{app:boundary_condition}. In the simulation, the related
parameters are set as $\epsilon = 1$,
$a = 0.01372 \rm{GJ/cm^3-ke V^4}$ and $c = 29.98 \rm{cm / ns}$. The
mesh size is $100 \times 100$ in the spatial space and the $P_N$
method with $M = 7$ is utilized. The first-order scheme for the time
discretization and third-order WENO reconstruction in the spatial
discretization is utilized here with the same filtering techniques in
the last section.

\begin{figure}[!htb]
  \centering
  \subfloat[Radiation temperature]{
    \label{fig:hohlraum_imc}
    \includegraphics[width=0.48\textwidth]{./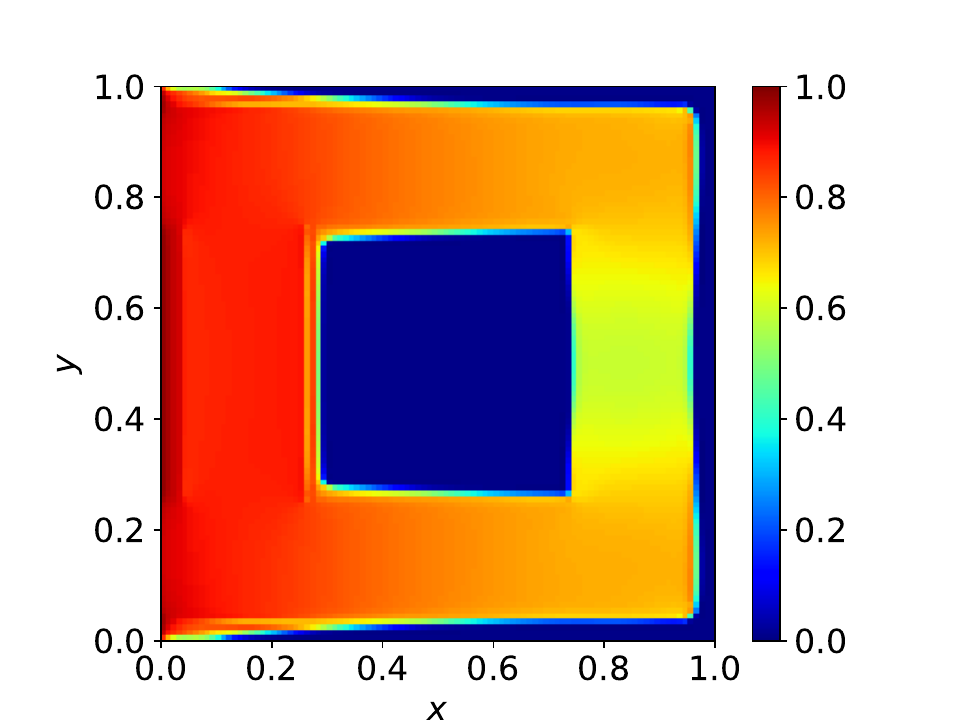}}
  \hfill
  \subfloat[Material temperature]{
    \label{fig:hohlraum_Pn}
    \includegraphics[width=0.48\textwidth]{./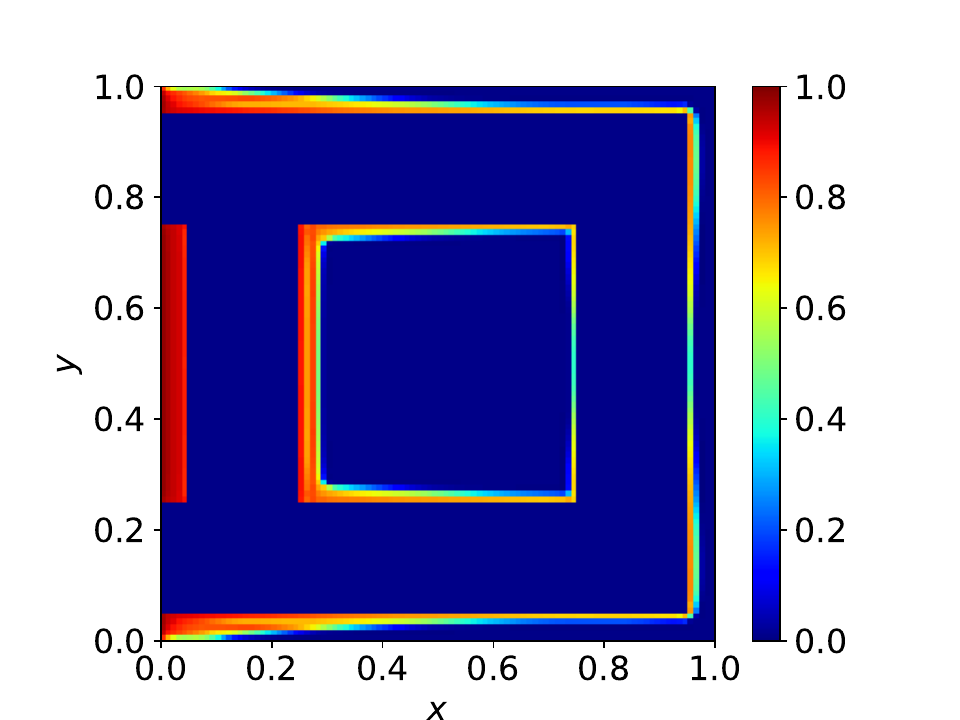}}
  \caption{The contour plots of radiation and material temperature of
    the hohlraum problem produced by the AP scheme at $t = 1$. (a)
    Radiation temperature. (b) Material temperature.}
    \label{fig:hohlraum_contour}
  \end{figure}

  \cref{fig:hohlraum_contour} presents the contour plots of the
  numerical solution for the radiation temperature and the material
  temperature at $t = 1$, where the radiation temperature is defined
  as
\begin{equation}
  T_{\rm rad} = \sqrt[4]{\dfrac{I^0_0}{a c}}.
\end{equation}
As is stated in \cite{mcclarren2010robust} that the solution to this
problem has two properties, first of which is the non-uniform heating
of the central block, and the other is less radiation directly behind
the block than those regions within the line of source sight. The same
phenomenon could also be found in the numerical results
here. Moreover, the numerical results also show that the photons could
bend around the front wall and the back wall is starting to heat up
and re-emit photons. The numerical solutions along $y = 0.125$ and
$x = 0.85$ are plotted in Figure \ref{fig:hohlraum_slice}, where the
solution obtained by IMC method \cite{mcclarren2010robust} is also
plotted. We can find that the numerical solutions are in rough
agreement with the IMC solution.

\begin{figure}[!htb]
  \centering
  \subfloat[$y=0.125$]{
    \label{fig:hohlraum_slice1}
    \includegraphics[width=0.48\textwidth]{./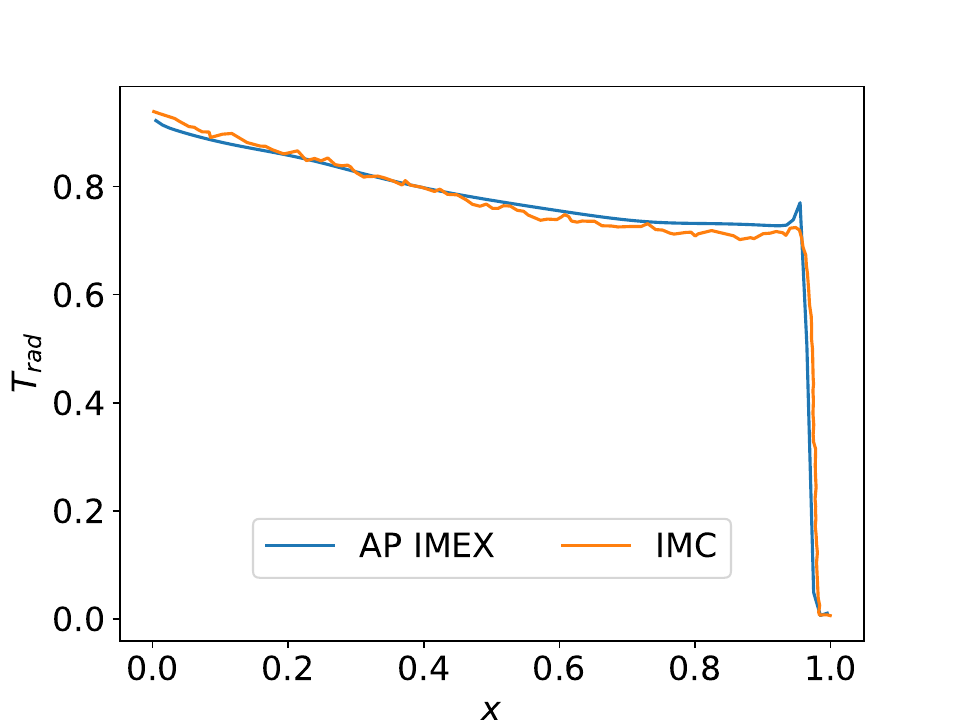}}
  \hfill
  \subfloat[$x = 0.85$]{
    \label{fig:hohlraum_slice2}
    \includegraphics[width=0.48\textwidth]{./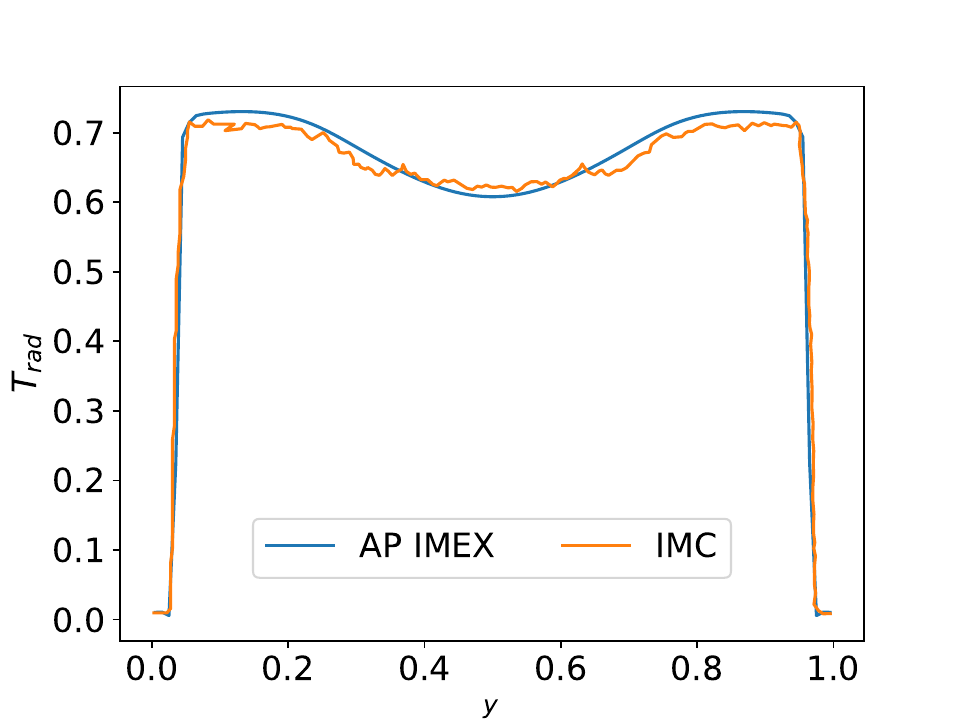}}
  \caption{The radiation temperature of the hohlraum problem at $t =1$
    on different slices. Here, the blue line the numerical solution by
    the AP numerical method, and the red line is the reference
    solution by IMC method in \cite{mcclarren2010robust}. (a)
    radiation temperature $T_{\rm rad}$ at $y = 0.125$. (b) radiation
    temperature $T_{\rm rad}$ at $x = 0.85$.}
    \label{fig:hohlraum_slice}
\end{figure}


\section{Conclusions}
\label{sec:conclusion}
In this paper, we have developed an AP IMEX numerical scheme for the
RTE system in the framework of $P_N$ method. The Chapman-Enskog
expansion is utilized to derive the order of each expansion
coefficient of the specific intensity respected to the mean free
path. Thus, in each equation of the $P_N$ system, the terms at
lower-order of the mean free path are set as an implicit term with
those at higher-order set as an explicit term.  Therefore, the
implicit-explicit $P_N$ system can be solved at the computational cost
of a completely explicit scheme with the time step length independent
of the mean free path. The analysis of the total energy shows the
energy stability with the evolution of time. Numerical examples have
exhibited the AP property and the efficiency of this new
scheme. However, this method is limited to the gray approximation of
the radiative transfer equations for the moment. Research works on the
frequency-dependent problem are ongoing.

\section*{Acknowledgements}

We thank Prof. Ruo Li from PKU, Prof. Zhenning Cai from NUS, Prof. Tao
Xiong from XMU, Prof. Kailiang Wu from SUSTech, Dr. Zhichao Peng from
MSU and Prof. Jiequan Li, Prof. Wenjun Sun, Dr. Yi Shi from IAPCM for
their valuable suggestions.  Weiming Li is partially supported by the
Science Challenge Project (No. TZ2016002) and the National Natural
Science Foundation of China (12001051). Peng Song is partially
supported by the Science Challenge Project (No.  TZ2016002), the CAEP
foundation (No. CX20200026).  The work of Yanli Wang is partially
supported by Science Challenge Project (No.  TZ2016002) and the
National Natural Science Foundation of China (Grant No. 12171026,
U1930402 and 12031013).

\appendix

\section{Appendix}
\label{sec:app}
\subsection{The gray approximation of the radiative transfer equations
  for 1D angle problem and related $P_N$ equations}
\label{app:1D}
The time-dependent gray approximation of the radiative transfer equations \cite{semi2008Ryan} in a
one-dimensional planar geometry medium have the form as
\begin{subequations}
  \label{eq:1D_RTE}
  \begin{align}
    \label{eq:1D_RTE_I}
    & \frac{\epsilon^2}{c} \pd{I}{t} +\epsilon \mu \pd{I}{x} =
    \sigma\left(\frac{1}{2}a c   T^4 - I\right),  \qquad x \in [0, L], \\
    \label{eq:1D_RTE_T}
    & \epsilon^2 C_{v} \pd{T}{t} = \sigma \left(\int_{-1}^1 I \dd \mu - ac
      T^4\right),
  \end{align}
\end{subequations}
where $I(t,  x, \mu)$ is the specific intensity of radiation,
$\mu = \cos \theta \in [-1, 1]$ is the internal coordinate associated
with the angle $\theta \in [0, \pi]$. $T(t, x)$ is the material
temperature, and $\sigma$ is the absorption opacity. Moreover, the
one-dimensional form of \eqref{eq:linear_RTE} is reduced into
\begin{equation}
  \label{eq:plane_eq}
  \frac{\epsilon^2}{c} \pd{I}{t} +\epsilon \mu \pd{I}{x} =
  \sigma\left(\frac{1}{2} \int I \dd \mu - I\right),  \qquad x \in [0, L]. \\
\end{equation}
For \eqref{eq:1D_RTE}, the basis function for the $P_N$ method is the
Legendre polynomials. The moments are defined as
\begin{equation}
  \label{eq:1D_moment}
  I_l =  \int_{-1}^1 P_l(\mu) I(t, x, \mu) \dd \mu, \qquad l = 0,
  \cdots M,
\end{equation}
where $P_l$ is the Legendre polynomial.  Then, the $P_N$ equations for
\eqref{eq:1D_RTE} are
\begin{equation}
  \label{eq:1D_Pn}
  \begin{aligned}
    & \frac{\epsilon^2}{c}\pd{\bbI}{t} + \epsilon\bB^{\rm low }
    \pd{\bbI}{x} + \epsilon {\bB}^{\rm up} \pd{{\bbI}}{x} = -\sigma
    {\bbI} + \sigma a c
    T^4  e_1, \\
    & \epsilon^2 C_{v} \pd{T}{t} = \sigma \left(I_0 - ac T^4\right),
  \end{aligned} 
\end{equation}
where $\bbI = (I_0, I_1, \cdots, I_M)$ and
$e_1 = (1, 0, \cdots, 0)^T$.  $\bB^{\rm low}$ and $\bB^{\rm up}$ are
triangular matrix with the non-zero entries as
\begin{equation}
  \label{eq:B}
  \bB^{\rm low}(i+1, i) = \frac{i}{2 i+1}, \qquad \bB^{\rm up}(i,
  i+1) = \frac{i}{2(i-1)+1}, \qquad i = 1, \cdots M.
\end{equation}

\subsection{Inflow boundary condition for $P_N$ equations}
\label{app:boundary_condition}
We implement the inflow boundary condition for the $P_N$ equations by
specifying the values of coefficients of the $P_N$ system in ghost
cells. Choosing the left boundary as an
example, the incoming specific intensity incident on the boundary
interface is
\begin{equation}
  I(\mu) = I^b(\mu), \quad \text{for}~\mu > 0.
\end{equation}
For the $P_N$ method, the numerical boundary can be rewritten as
\begin{equation}
  \label{eq:boundary_1D}
  I^{\rm ghost}(\mu) = \left\{
    \begin{array}{cc}
      I^{b}(\mu), &\mu > 0, \\
      I^{i}(\mu), & \mu < 0,
    \end{array}
  \right.
\end{equation}
where $I^{i}(\mu)$ is the specific intensity at the left boundary of
the area.
Then, the expansion coefficient at the ghost cell is
\begin{equation}
  I^{\rm ghost}_{l} = \int_{-1}^1 I^{\rm ghost}(\mu) P_l(\mu)\dd \mu.
\end{equation}



The implementation of the inflow boundary condition in 2D is similar
in spirit to that of 1D. Supposing $\bn$ is the outward normal of the
boundary interface, the incident specific intensity on the boundary is
\begin{equation}
  I(\bsOmega) = I^b(\bsOmega), \quad \text{for}~\bsOmega \cdot \bn <
  0.
\end{equation}
For the $P_N$ method, the numerical boundary can be rewritten as
\begin{equation}
  \label{eq:boundary_2D}
  I^{\rm ghost}(\bsOmega) = \left\{
    \begin{array}{cc}
      I^{b}(\bsOmega), & \bsOmega \cdot \bn < 0, \\
      I^{i}(\bsOmega), & \bsOmega \cdot \bn > 0,
    \end{array}
  \right.
\end{equation}
where $I^{i}(\bsOmega)$ is the specific intensity on the interior
side of the boundary interface.
Thus, the expansion coefficient at the ghost cell is
\begin{equation}
  \begin{aligned}
    I^{m, \rm ghost}_{l} & = \int_{\bbS^2} I^{\rm ghost}(\bsOmega)
    \overline{Y^m_l}(\bsOmega) \dd\bsOmega \\
    & = \int_{\bsOmega \cdot \bn < 0} I^{b}(\bsOmega)
    \overline{Y^m_l}(\bsOmega) \dd \bsOmega + \int_{\bsOmega \cdot \bn
      > 0} I^{i}(\bsOmega)
    \overline{Y^m_l}(\bsOmega) \dd \bsOmega \\
    & = \int_{\bsOmega \cdot \bn < 0} I^{b}(\bsOmega)
    \overline{Y^m_l}(\bsOmega) \dd \bsOmega +
    \sum\limits_{j=0}^M\sum\limits_{k=-j}^j I^{k, i}_j \int_{\bsOmega
      \cdot \bn > 0} Y^k_j(\bsOmega) \overline{Y^m_l}(\bsOmega) \dd
    \bsOmega.
  \end{aligned}
\end{equation}
The integration $\int_{\bsOmega \cdot \bn > 0} Y^k_j(\bsOmega)
\overline{Y^m_l}(\bsOmega) \dd \bsOmega$ does not depend on the specific numerical solutions, and is pre-computed.

\subsection{Proof of Proposition \ref{thm:fourier}}
\label{app:fourier}

In this section, the proof of the Proposition \ref{thm:fourier} is proposed here.

{\renewcommand\proofname{Proof of Proposition \ref{thm:fourier}}
  \begin{proof}
    Following the method in \cite{Peng2020}, we will begin the Fourier
    analysis of \eqref{eq:first_order} for the $P_1$ system of the
    linear equation system \eqref{eq:plane_eq}.  The result can be
    extended to the generalized $P_N$ system naturally. We will
first discuss two special cases where $\xi = 0, \pi$. Therein, $\bf C$
is reduced into a real diagonal matrix with the maximum eigenvalues
equaling $1$.  According to the principle, the numerical scheme is
stable.

Then, we study the general case by considering two scenarios according
to the time step length:
\begin{enumerate}
  \item $\epsilon < \Delta x$, where
  \begin{equation}
    \label{eq:fourier_time_2}
    \Delta t =  C \Delta x^2.    
  \end{equation}
  Substituting the time step length \eqref{eq:fourier_time_2} into
  \eqref{eq:fourier_g}, we can find that $\lambda_i, i=1,2$ are
  functions of $C$, $\frac{\epsilon}{\Delta x}$, $\alpha$ and
  $\xi$. Introducing two variables as $\beta_1 = \log_{10}(C)$ and
  $\beta_2 = \log_{10} (\epsilon / \Delta x)$, the
  stability regions are plotted in Figure \ref{fig:Fourier_1} with fixed
  $\alpha$. Here the discrete wave number $\xi$ is uniformly taken
  from $[0, 2\pi]$ with $200$ samples. In this case, due to the
  definition of $C$ which is the CFL number and $\epsilon < \Delta x$,
  the range for $\beta_1$ and $\beta_2$ is changed into
    \begin{equation}
      \beta_1 < 0, \qquad \beta_2 < 0.
    \end{equation}
    Moreover, it is natural to demand that
    $\epsilon < \Delta x < 0.4$. Thus, $\alpha$ is taken uniformly
    from $[0, \exp(-1/0.16)]$ with 100 samples and six cases are shown
    in Figure \ref{fig:Fourier_1} due to their similar behavior. From
    Figure \ref{fig:Fourier_1}, we can find that when $\alpha = 0$,
    the numerical scheme is always stable.  However, with the
    increase of $\alpha$, the stability region is becoming smaller,
    especially when the CFL number $C$ is large and the radio
    $\epsilon / \Delta x$ is small. We find that when
    $\alpha = \exp(-1/0.16)$, the numerical scheme is stable when
    $\log_{10}(\epsilon / \Delta x) > -2.5$. Noting that when
    $\alpha = \exp(-1/0.16)$, which means $\epsilon = 0.4$,
    $\log_{10}(\epsilon / \Delta x)$ is always larger than $-2.5$ for
    $\Delta x < 1$. This indicates that in the simulation of benchmark
    problems, the stability condition is always satisfied.

\item $\epsilon > \Delta x$, where
  \begin{equation}
    \label{eq:fourier_time_1}
    \Delta t = C \epsilon \Delta x.  
  \end{equation}
  Substituting the time step length \eqref{eq:fourier_time_1} into
  \eqref{eq:fourier_g}, we can easily find that $\lambda_i, i = 1, 2$
  are also the function of $C$, $\frac{\epsilon}{\Delta x}$, $\alpha$
  and $\xi$. Introducing the same two variables $\beta_{i}, i = 1,2$,
  we plot the stability regions in Figure \ref{fig:Fourier} with fixed
  $\alpha$. Here the discrete wave number $\xi$ is uniformly taken
  from $[0, 2\pi]$ with $200$ samples. Since
  \begin{equation}
    \label{eq:alpha1}
    \alpha = \exp\left(-\frac{1}{\epsilon^2}\right), 
  \end{equation}
  and assuming $\epsilon < 1$ in the numerical test, $\alpha$ is
  taken uniformly from $[0, 0.5]$ with 100 samples. As their behavior
  is similar, the six cases $\alpha = 0, 0.05, 0.1, 0. 2, 0.3$ and $0.5$
  are plotted here to illustrate the result. Moreover, due to the
  definition of $C$ which is the CFL number, and the condition that
  $\epsilon > \Delta x$, it holds that
  \begin{equation}
    \beta_1 < 0, \qquad \beta_2 > 0.
  \end{equation}
  From Figure \ref{fig:Fourier}, we can find that the numerical scheme
  is stable under the time step length \eqref{eq:fourier_time_1}. In
  the numerical tests, the upper bound of $\epsilon$ is
  $\epsilon = 10^5 \Delta x$, which is large enough for the
  computational parameter.
\end{enumerate}

\begin{figure}[!htb]
  \centering
  \subfloat[$\alpha = 0$]{
    \includegraphics[width=0.3\textwidth]{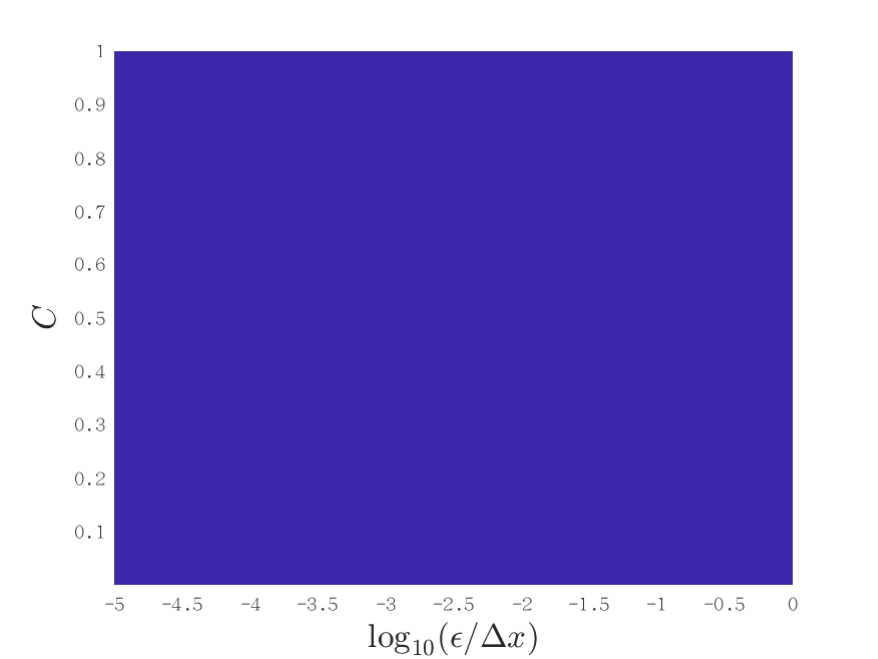}}
  \hfill
  \subfloat[$\alpha = 0.1 \exp(-1/0.16)$]{
    \includegraphics[width=0.3\textwidth]{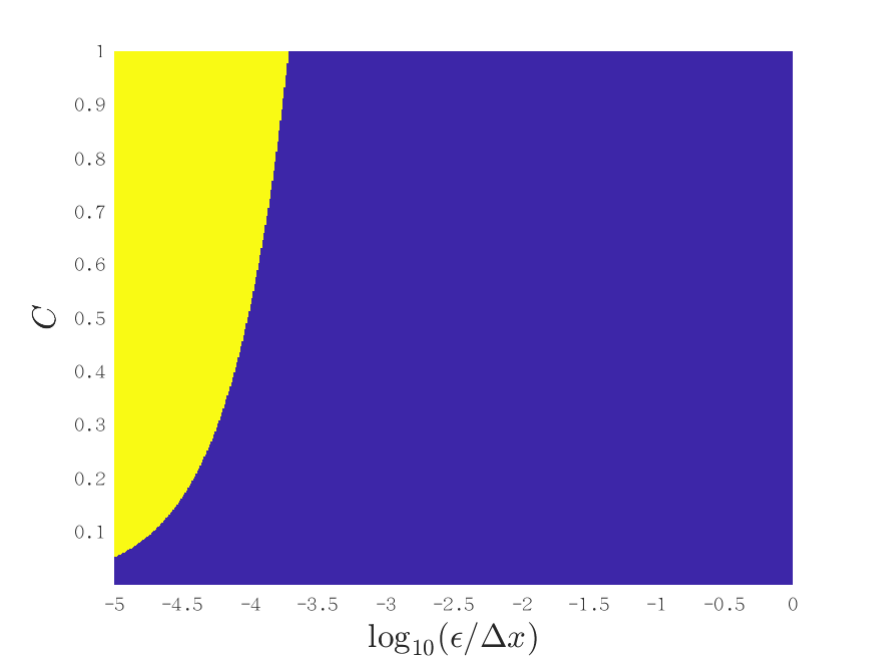}}
  \hfill
  \subfloat[$\alpha = 0.3\exp(-1/0.16)$]{
    \includegraphics[width=0.3\textwidth]{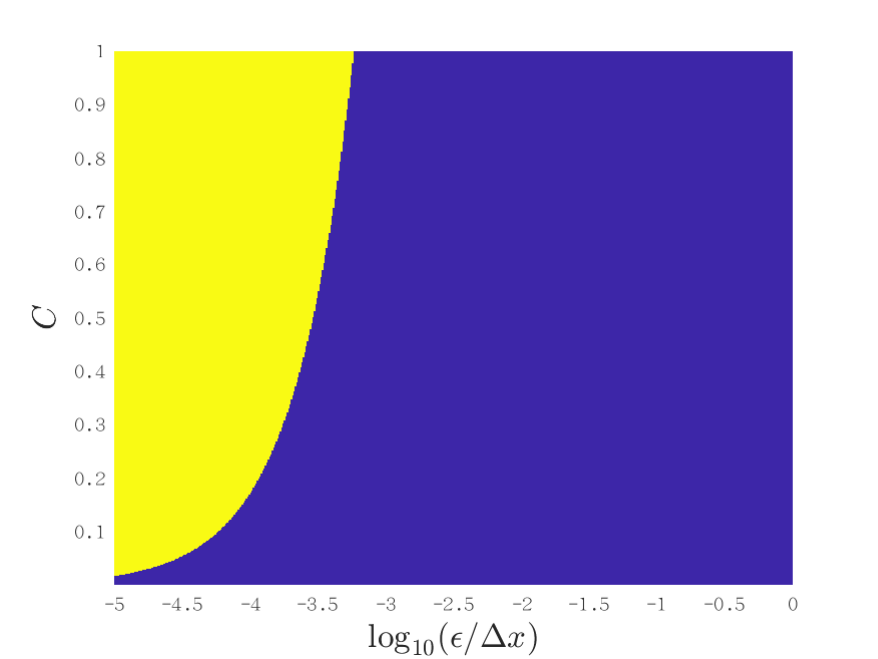}} \\
  \subfloat[$\alpha = 0. 5\exp(-1/0.16)$]{
    \includegraphics[width=0.3\textwidth]{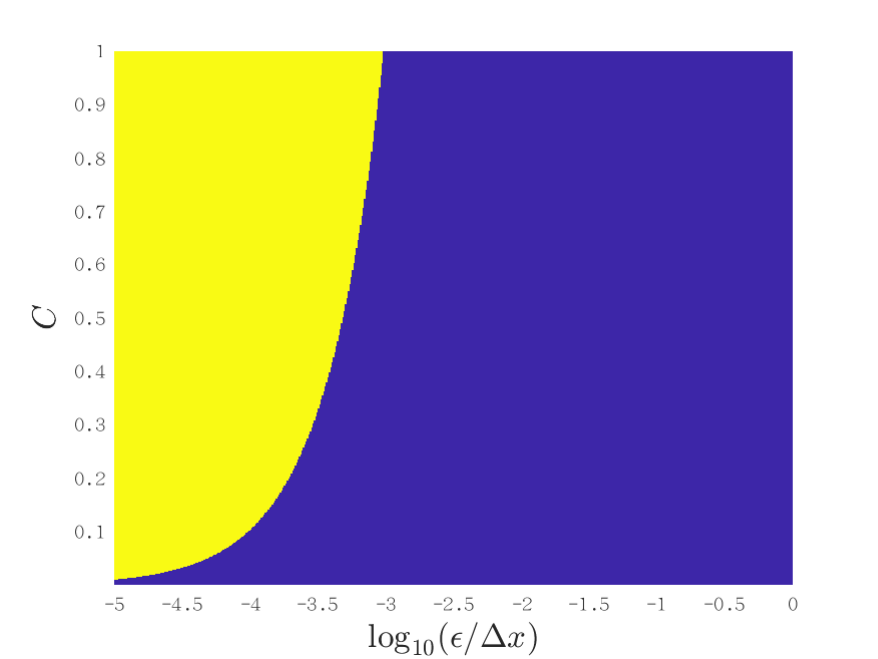}}
  \hfill
  \subfloat[$\alpha = 0.8\exp(-1/0.16)$]{
    \includegraphics[width=0.3\textwidth]{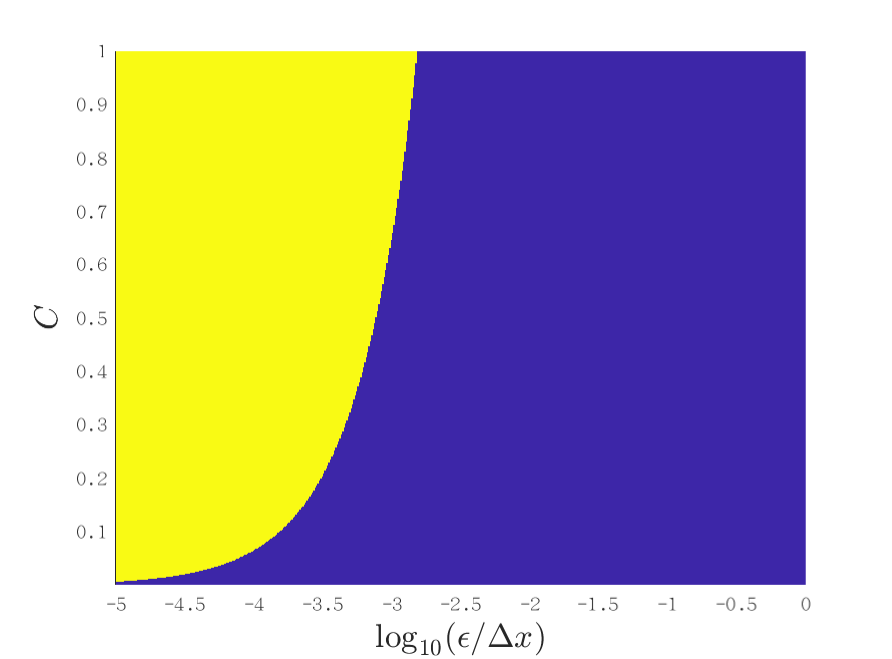}}
  \hfill
  \subfloat[$\alpha = \exp(-1/0.16)$]{
    \includegraphics[width=0.3\textwidth]{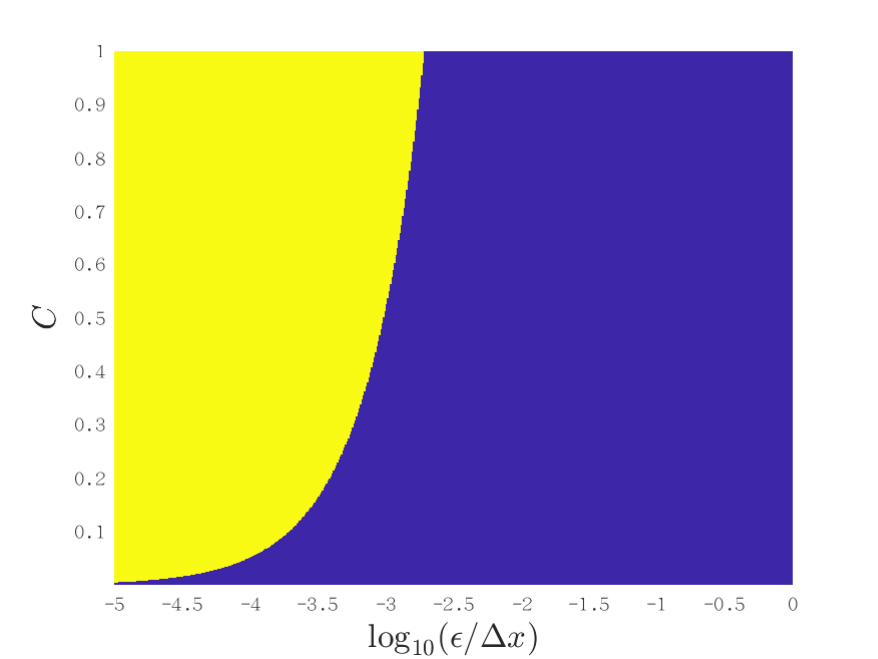}} \\
  \caption{The stability region for the numerical scheme
    \eqref{eq:first_order} of $P_1$ system under the condition
    \eqref{eq:fourier_time_2}. The $x$-axis is
    $\beta_2 = \log_{10}(\epsilon / \Delta x)$, and the $y$-axis is the
    CFL number $C$. The blue region is the area where the numerical
    scheme is stable and the yellow region is the area where the
    numerical scheme is unstable.}
  \label{fig:Fourier_1}
\end{figure}
\begin{figure}[!htb]
  \centering
  \subfloat[$\alpha = 0$]{
    \includegraphics[width=0.3\textwidth]{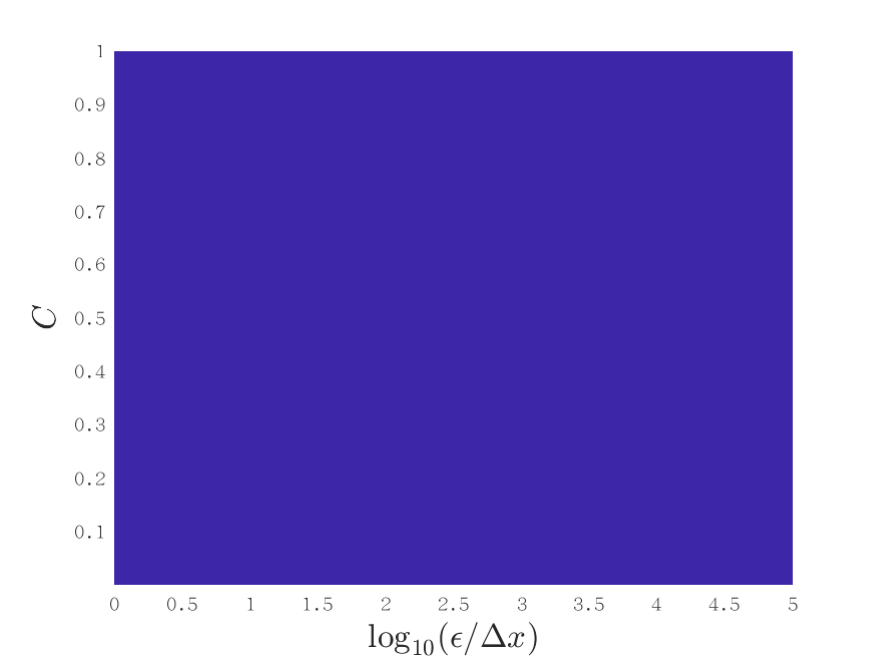}}
  \hfill
  \subfloat[$\alpha = 0.05$]{
    \includegraphics[width=0.3\textwidth]{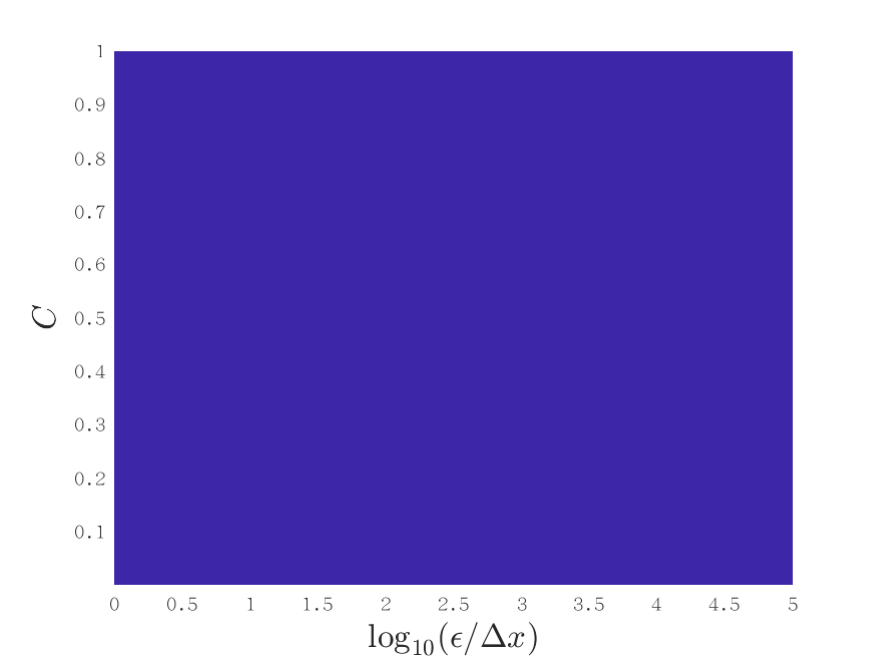}}
  \hfill
  \subfloat[$\alpha = 0.1$]{
    \includegraphics[width=0.3\textwidth]{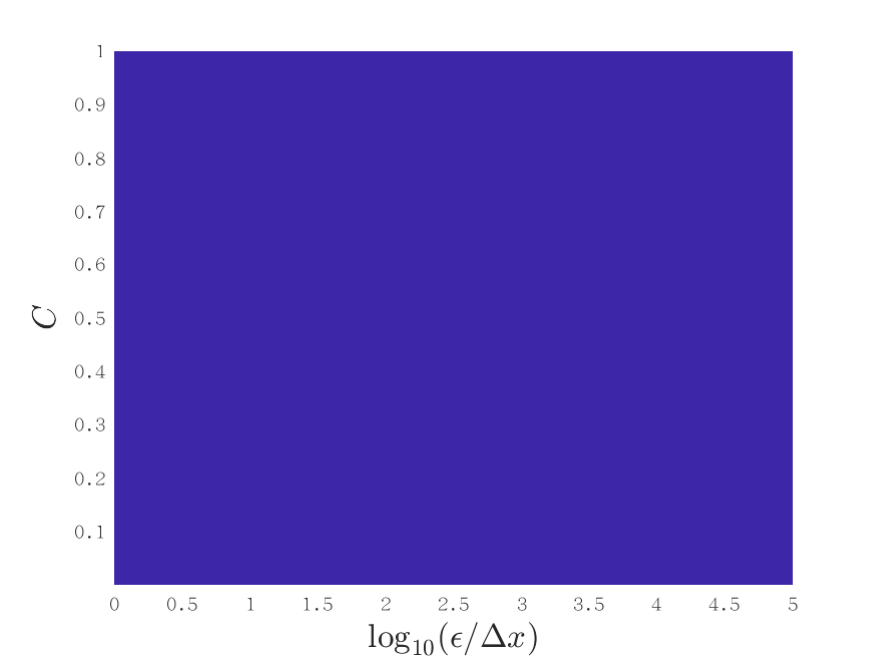}} \\
  \subfloat[$\alpha = 0. 2$]{
    \includegraphics[width=0.3\textwidth]{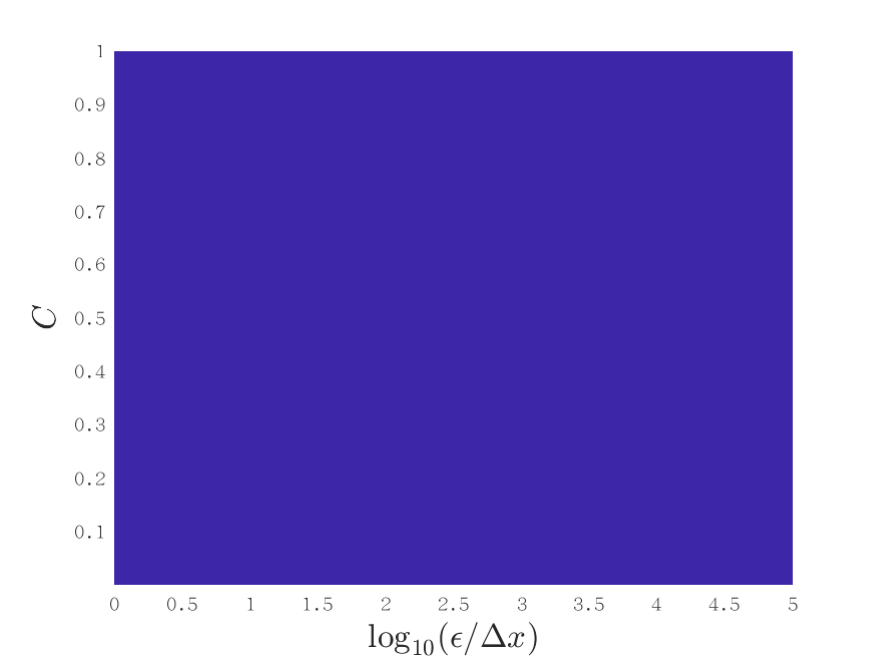}}
  \hfill
  \subfloat[$\alpha = 0.3$]{
    \includegraphics[width=0.3\textwidth]{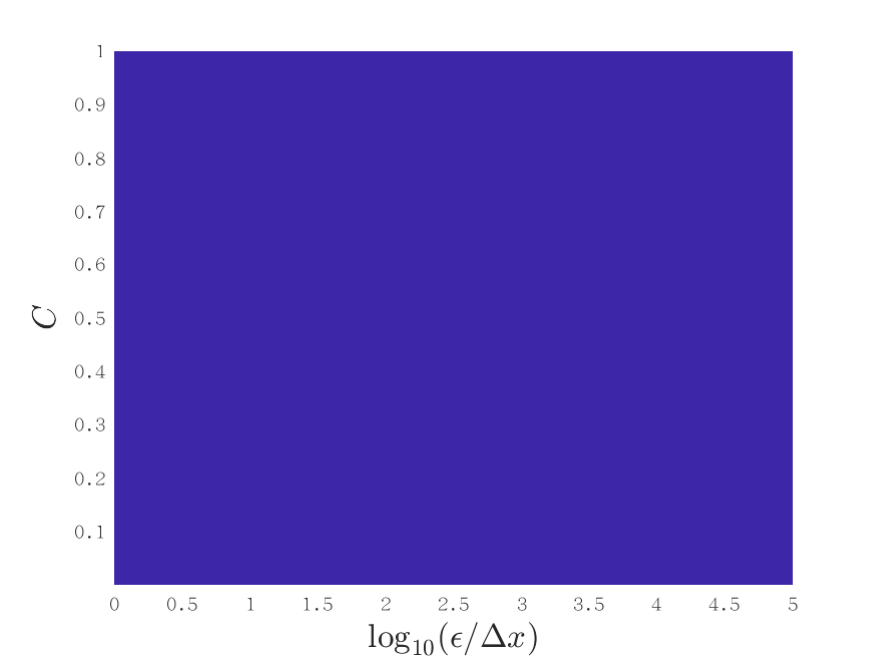}}
  \hfill
  \subfloat[$\alpha = 0.5$]{
    \includegraphics[width=0.3\textwidth]{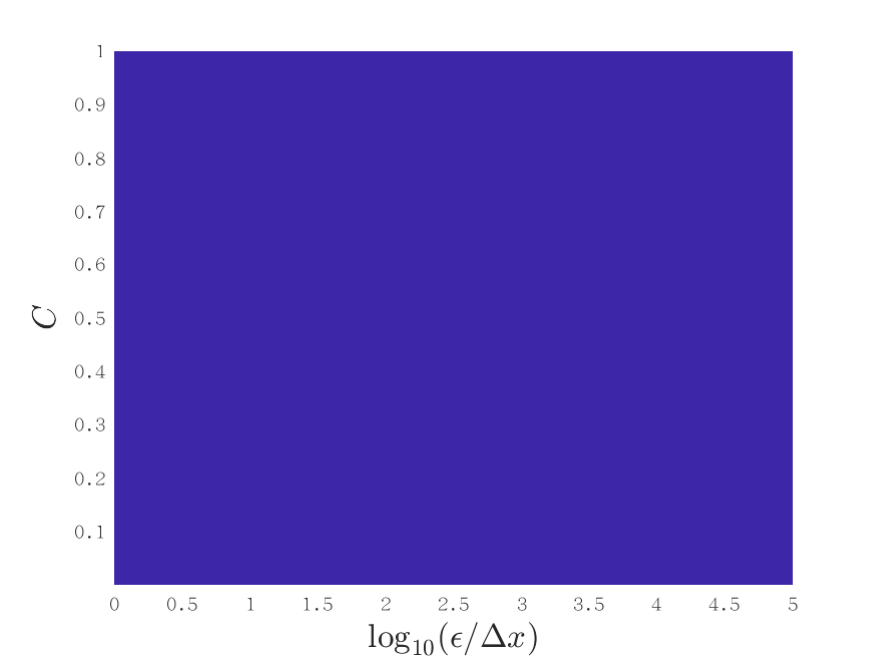}} \\
  \caption{The stability region for the numerical scheme
    \eqref{eq:first_order} of $P_1$ system under the condition
    \eqref{eq:fourier_time_1}. The $x$-axis is
    $\beta_2 = \log_{10}(\epsilon / \Delta x)$, and the $y$-axis is the
    CFL number $C$. The blue region is the area where the numerical
    scheme is stable.}
  \label{fig:Fourier}
\end{figure}
\end{proof}
}

\subsection{Proof of Theorem \ref{thm:discrete_energy}}
\label{app:energy_proof}
In this section, the proof of Theorem \ref{thm:discrete_energy} is
proposed here.

{\renewcommand\proofname{Proof of Theorem \ref{thm:discrete_energy}}
\begin{proof}
  We will take $M=2$ as an example, and it could be extended to the
  general case naturally. Moreover, without loss of generality, we set
  $a = c = C_v = \sigma = 1$ in the proof.  When $M=2$,
  \eqref{eq:1D_first_order} is reduced into
  \begin{subequations}
    \label{eq:P2}
    \begin{align}
      \label{eq:app_I0}
      \epsilon^2 \frac{I_{0,j}^{n+1} - I_{0,j}^n}{\Delta t}  & +
      \epsilon  \frac{I_{1, j+1}^{n} - I_{1, j-1}^{n}}{2 \Delta x} -
      \frac{\alpha \epsilon }{2} \frac{I_{0, j+1}^n - 2 I_{0,j}^n +
      I_{0, j-1}^n}{\Delta x}  = \left((T_j^4)^{n+1} -
      I_{0,j}^{n+1}\right), \\
      \label{eq:app_I1}
      \epsilon^2 \frac{I_{1,j}^{n+1} - I_{1,j}^n}{\Delta t}
                                                             & +
                                                               \frac{\epsilon}{3}  \frac{I_{0, j+1}^{n+1} - I_{0, j-1}^{n+1}}{2
                                                               \Delta x}
                                                               +\frac{2\epsilon}{3}  \frac{I_{2, j+1}^{n} - I_{2, j-1}^{n}}{2
                                                               \Delta x} - 
                                                               \frac{\alpha \epsilon }{2} \frac{I_{1, j+1}^n - 2 I_{1,j}^n +
                                                               I_{1, j-1}^n}{\Delta x}  = - I_{1,j}^{n+1}, \\
      \label{eq:app_I2}
      \epsilon^2 \frac{I_{2,j}^{n+1} - I_{2,j}^n}{\Delta t} & +
     \frac{2\epsilon}{5}  \frac{I_{1, j+1}^{n+1} - I_{1, j-1}^{n+1}}{2
      \Delta x} - 
      \frac{\alpha \epsilon }{2} \frac{I_{2, j+1}^n - 2 I_{2,j}^n +
      I_{2, j-1}^n}{\Delta x}  = - I_{2,j}^{n+1}, \\
      \label{eq:app_T}
      \epsilon^2 \frac{T_j^{n+1} - T_j^n}{\Delta t}
                                                             & +\epsilon \frac{I_{0,j}^{n+1} - I_{0,j}^n}{\Delta t} +
                                                               \epsilon  \frac{I_{1, j+1}^{n} - I_{1, j-1}^{n}}{2 \Delta x}
                                                               - \frac{\alpha \epsilon }{2} \frac{I_{0, j+1}^n - 2 I_{0,j}^n +
                                                               I_{0, j-1}^n}{\Delta x}
                                                               = 0.
    \end{align}
  \end{subequations}
  For \eqref{eq:app_I0}, multiplying it by $I_{0,j}^{n+1}$, we can get that
  \begin{equation}
    \label{eq:I0_1}
     \epsilon^2 I_{0,j}^{n+1} \frac{I_{0,j}^{n+1} - I_{0,j}^n}{\Delta t}   +
      \epsilon  I_{0,j}^{n+1}  \frac{I_{1, j+1}^{n} - I_{1, j-1}^{n}}{2 \Delta x} -
      \frac{\alpha \epsilon }{2} I_{0,j}^{n+1}  \frac{I_{0, j+1}^n - 2 I_{0,j}^n +
      I_{0, j-1}^n}{\Delta x}  = I_{0,j}^{n+1} \left((T_j^4)^{n+1} -
      I_{0,j}^{n+1}\right).
  \end{equation}
  For \eqref{eq:app_I1} and \eqref{eq:app_I2}, shifting it backward one time
  step and multiplying $3I_{1,j}^{n}$ and $5I_{2, j}^{n}$ respectively,
  we can derive that
  \begin{equation}
    \label{eq:I1_1}
    \begin{aligned}
      3\epsilon^2 I_{1,j}^n \frac{I_{1,j}^{n} - I_{1,j}^{n-1}}{\Delta
        t} & + \epsilon I_{1,j}^n \frac{I_{0, j+1}^{n} - I_{0,
          j-1}^{n}}{2 \Delta x} + \epsilon I_{1,j}^n \frac{I_{2,
          j+1}^{n-1} - I_{2, j-1}^{n-1}}{ \Delta x} - \frac{3\alpha
        \epsilon }{2} I_{1,j}^n\frac{I_{1, j+1}^{n-1} - 2
        I_{1,j}^{n-1} +
        I_{1, j-1}^{n-1}}{\Delta x}  = - 3(I_{1,j}^{n})^2, \\
      5 \epsilon^2 I_{2,j}^n\frac{I_{2,j}^{n} - I_{2,j}^{n-1}}{\Delta
        t} & + \epsilon I_{2,j}^n \frac{I_{1, j+1}^{n} - I_{1,
          j-1}^{n}}{ \Delta x} -\frac{5 \alpha \epsilon }{2}
      I_{2,j}^n \frac{I_{2, j+1}^{n-1} - 2 I_{2,j}^{n-1} +
        I_{2, j-1}^{n-1}}{\Delta x}  = - 5 (I_{2,j}^{n})^2.
    \end{aligned}
  \end{equation}
  Summing \eqref{eq:I0_1} and \eqref{eq:I1_1} over $j$, then it holds
  that
\begin{equation}
  \label{eq:sum_A3}
  \frac{\epsilon^2}{2\Delta t} \sum_j\Big[ (I_{0,j}^{n+1})^2 -
  (I_{0,j}^{n})^2 + 3\left((I_{1,j}^{n})^2 - (I_{1,j}^{n-1})^2
  \right)+ 5 \left((I_{2,j}^{n})^2 -
    (I_{2,j}^{n-1})^2\right) \Big]
  + A_0  = A_1 + A_2 + A_3,
    \end{equation}
    where
    \begin{subequations}
            \label{eq:sum_A4}
            \begin{align}
              \label{eq:sum_A0}
        A_0 &= \frac{\epsilon^2}{2\Delta t} \sum_j\Big[
        (I_{0,j}^{n+1}- I_{0,j}^{n})^2 + 3(I_{1,j}^{n} -
        I_{1,j}^{n-1})^2 + 5(I_{2,j}^{n} -
        I_{2,j}^{n-1})^2 \Big],\\
        \label{eq:sum_A1}
        A_1 &= -\frac{\epsilon}{2\Delta x}\sum_j  \Big[ I_{0,j}^{n+1}
        (I_{1, j+1}^{n} - I_{1, j-1}^{n}) +   I_{1,j}^n (I_{0, j+1}^{n} - I_{0,
          j-1}^{n}) + 2I_{1,j}^n (I_{2,
          j+1}^{n-1} - I_{2, j-1}^{n-1}) +  2I_{2,j}^n (I_{1, j+1}^{n} - I_{1,
          j-1}^{n})\Big], \\
        \label{eq:sum_A2}
        A_2 & = \frac{\alpha \epsilon}{2\Delta x}\sum_j  \Big[ I_{0,j}^{n+1}
        (I_{0, j+1}^{n} -  2I_{0, j}^{n}  +I_{0, j-1}^{n}) +  3I_{1,j}^n
        (I_{1, j+1}^{n-1} - 2 I_{1,j}^{n-1}  + I_{1, j-1}^{n-1})
        + 5I_{2,j}^n (I_{2,
          j+1}^{n-1} - 2I_{2, j}^{n-1} +  I_{2, j-1}^{n-1})\Big],\\
        A_3 &= -\sum_j \Big[-I_{0,j}^{n+1}(T_{j}^4)^{n+1} +
        (I_{0,j}^{n+1})^2 + 3(I_{1, j}^n)^2 + 5(I_{2,j}^n)^2\Big].
      \end{align}
    \end{subequations}
    Then we will begin from the approximation of $A_1$ and $A_2$. With
    some arrangement and the periodic boundary condition, $A_1$ is
    changed into
  \begin{equation}
    \label{eq:A1}
    \begin{aligned}
      A_1 = &-\frac{\epsilon}{2\Delta x}\sum_j \Big[ (I_{0,j}^{n+1} -
      I_{0,j}^{n}) (I_{1, j+1}^{n} - I_{1, j-1}^{n}) + 2(I_{1,j+1}^n -
      I_{1,j-1}^{n}) (I_{2, j}^{n} - I_{2, j}^{n-1}) \Big]\\
      \leqslant & \frac{\epsilon}{2\Delta x}\sum_j \Big[ \frac12\beta_1^2
      (I_{0,j}^{n+1} - I_{0,j}^{n})^2 + \frac{1}{2 \beta_1^2} (I_{1,
        j+1}^{n} - I_{1, j-1}^{n})^2 + \beta_2^2 (I_{2,j}^n -
      I_{2,j}^{n-1})^2 + \frac{1}{\beta_2^2} (I_{1, j+1}^{n} - I_{1, j-1}^{n})^2 \Big] \\
      \leqslant& \frac{\epsilon}{2\Delta x}\sum_j \Big[ \frac12\beta_1^2
      (I_{0,j}^{n+1} - I_{0,j}^{n})^2  + \beta_2^2 (I_{2,j}^n - I_{2,j}^{n-1})^2 +
      \left(\frac{2}{\beta_1^2}   + \frac{4}{\beta_2^2}\right) (I_{1, j}^{n})^2 \Big].
    \end{aligned}
    \end{equation}
    With the estimation that
    \begin{equation}
      \label{eq:local_A2_1}
      \begin{aligned}
        & \sum_j I_{0,j}^{n+1}(I_{0,j+1}^n - 2 I_{0,j}^n + I_{0,j-1}^n)\\
        & \qquad = \sum_j \Big[(I_{0,j}^{n+1} - I_{0,j}^n)(I_{0,j+1}^n - 2
        I_{0,j}^n + I_{0,j-1}^n) + I_{0,j}^n(I_{0,j+1}^n - 2 I_{0,j}^n
        +
        I_{0,j-1}^n)  \Big] \\
        &\qquad  \leqslant \sum_j\Big[\frac12\beta_3^2 (I_{0,j}^{n+1} - I_{0,j}^n)^2
        + \frac{2}{\beta_3^2}(I_{0,j}^n - I_{0,j-1}^n)^2\Big] - \sum_j
        (I_{0,j}^n - I_{0,j-1}^n)^2 \\
        &\qquad  = \sum_j\Big[\frac12\beta_3^2 (I_{0,j}^{n+1} - I_{0,j}^n)^2 +
        (\frac{2}{\beta_3^2} - 1)\left(I_{0,j}^n -
          I_{0,j-1}^n\right)^2\Big].
      \end{aligned}
    \end{equation}
    Similarly,  it also holds that
     \begin{equation}
      \label{eq:local_A2_2}
      \begin{aligned}
        \sum_j I_{1,j}^{n}(I_{1,j+1}^{n-1} - 2 I_{1,j}^{n-1} +
        I_{1,j-1}^{n-1}) &= \sum_j I_{1,j}^{n-1}(I_{1,j+1}^{n} - 2
        I_{1,j}^{n} +
        I_{1,j-1}^{n})  \\
        & \leqslant \sum_j\Big[\frac12\beta_4^2 (I_{1,j}^{n} -
        I_{1,j}^{n-1})^2 + 4\left(\frac{2}{\beta_4^2} -
          1\right)(I_{1,j}^n)^2\Big], \\
        \sum_j I_{2,j}^{n}(I_{2,j+1}^{n-1} - 2 I_{2,j}^{n-1} +
        I_{2,j-1}^{n-1}) &= \sum_j I_{2,j}^{n-1}(I_{2,j+1}^{n} - 2
        I_{2,j}^{n} +
        I_{2,j-1}^{n})  \\
        & \leqslant \sum_j\Big[\frac12\beta_5^2 (I_{2,j}^{n} -
        I_{2,j}^{n-1})^2 +
        \left(\frac{2}{\beta_5^2}-1\right)(I_{2,j}^n - I_{2,j-1}^n)^2\Big].
      \end{aligned}
    \end{equation}

    Let
    \begin{equation}
      \label{eq:app_coe}
      \beta_1^2 =  \frac{2 \epsilon  \Delta x}{\Delta t}-  2 \alpha,
      \qquad \beta_2^2 =  \frac{5\epsilon  \Delta x}{\Delta t}- 5
      \alpha, \qquad \beta_3^2 = 2, \qquad 
      \beta_4^2 = \frac{2 \Delta x \epsilon}{\alpha \Delta t},  \qquad
      \beta_5^2 = 2,
    \end{equation}
    then together with \eqref{eq:sum_A4}, \eqref{eq:A1},
    \eqref{eq:local_A2_1}, \eqref{eq:local_A2_2} and \eqref{eq:app_coe},
    \eqref{eq:sum_A3} is reduced into
      \begin{equation}
        \label{eq:sum_A5}
        \begin{split}
          \frac{\epsilon^2}{2} \sum_j\Big[ (I_{0,j}^{n+1})^2 -
          (I_{0,j}^{n})^2 + 3\left((I_{1,j}^{n})^2 - (I_{1,j}^{n-1})^2
          \right)+ 5 \left((I_{2,j}^{n})^2 -
            (I_{2,j}^{n-1})^2\right) \Big]\\
          \leqslant \beta_6\sum_j (I_{1,j}^{n})^2 + \Delta t \sum_j
          \Big[I_{0,j}^{n+1}(T_{j}^4)^{n+1} - (I_{0,j}^{n+1})^2 -
          5(I_{2,j}^{n})^2\Big],
                \end{split}
      \end{equation}
      with
      \begin{equation}
        \label{eq:coe_6}
        \beta_6 = \frac{9}{10}\dfrac{\epsilon \Delta t}{\Delta x}
        \left(\frac{1}{\frac{\epsilon \Delta x}{\Delta t} - \alpha}\right) + \dfrac{6 \alpha^2 (\Delta
        t)^2}{(\Delta x)^2}-\dfrac{6\alpha\epsilon\Delta t}{\Delta x} - 3 \Delta t.
    \end{equation}
    If it holds for $\beta_6$ that
      \begin{equation}
        \label{eq:coe_6_1}
        \beta_6 \leqslant 0, 
      \end{equation}
      with the time step length \eqref{eq:time}, then we can derive the
      stability result \eqref{eq:dis_energy}.  Precisely, with
      \eqref{eq:app_I0} and \eqref{eq:app_T}, we can derive that
      \begin{equation}
        \label{eq:T_1}
        \epsilon^2 \frac{T_j^{n+1} - T_j^n}{\Delta t}  = 
        -(T_j^4)^{n+1} + I_{0,j}^{n+1}.
 \end{equation}
 Multiplying \eqref{eq:T_1} with $(T_{j}^4)^{n+1}$ and summing over
 $j$, it holds with \eqref{eq:sum_A5}
  \begin{equation}
    \label{eq:sum_A6}
        \begin{split}
          & \sum_j \left[\frac{\epsilon^2}{2\Delta t}\Big(
            (I_{0,j}^{n+1})^2 - (I_{0,j}^{n})^2 +
            3\left[(I_{1,j}^{n})^2 - (I_{1,j}^{n-1})^2 \right]+ 5
            \left[(I_{2,j}^{n})^2 -
              (I_{2,j}^{n-1})^2\right]\Big) \right.\\
          &     \qquad \left. +\frac{\epsilon^2}{5 \Delta t} \Big
            [(T_j^5)^{n+1} -
            (T_j^5)^{n}\Big] \right] \leqslant - \sum_j
          \left[I_{0,j}^{n+1} - (T_{j}^4)^{n+1}\right]^2 \leqslant 0.
        \end{split}
      \end{equation}
      We derive the energy stability \eqref{eq:dis_energy}. The only
      point left is to prove \eqref{eq:coe_6_1}, which we will be done
      in two cases:
      \begin{enumerate}
      \item $\epsilon >  \Delta x$, in which case,
        \begin{equation}
          \label{eq:dt1}
          \Delta t= C \epsilon \Delta x. 
        \end{equation}
        Substituting \eqref{eq:dt1} into \eqref{eq:coe_6}, we can
        deduce that
        \begin{equation}
          \label{eq:coe_6_2}
          \beta_6 = \frac{9C^2\epsilon^2}{10}\left(\frac{1}{1 - \alpha C }\right)  + 6\alpha \epsilon^2C(\alpha C - 1)
          - 3C \Delta x \epsilon.
        \end{equation}
        Thus if
        \begin{equation}
          \label{eq:coe_6_3}
          0 < C < \min\left(\frac{\epsilon}{\alpha \Delta x},\frac{10 \Delta  x}{3 \epsilon + 10 \Delta x \alpha}
          \right),          
        \end{equation}
        it holds that $\beta_6 \leqslant 0$.
        
        For the coefficients $\beta_i^2, i = 1,\cdots 5$, it requires
        that $\beta_i^2 > 0$. Thus, from \eqref{eq:app_coe}, it
        demands that
        \begin{equation}
        \label{eq:coe_6_3_1}
            \frac{\epsilon \Delta x}{\Delta t} - \alpha > 0.
        \end{equation} 
        Substituting \eqref{eq:dt1} into \eqref{eq:coe_6_3_1}, we can obtain that 
        \begin{equation}
            \label{eq:coe_6_3_2}
            C < \frac{1}{\alpha}.
        \end{equation}
        Thus, the constrain on $C$ is changed into 
        \begin{equation}
        0 < C < \min\left(\frac{1}{\alpha},\frac{10 \Delta  x}{3 \epsilon + 10 \Delta x \alpha}
          \right).
    \end{equation}

      \item $\epsilon < \Delta x$, in which case
        \begin{equation}
          \label{eq:dt2}
          \Delta t= C \Delta x^2. 
        \end{equation}
        Substituting \eqref{eq:dt2} into \eqref{eq:coe_6}, we can
        deduce that
        \begin{equation}
          \label{eq:coe_6_4}
          \beta_6 =
          \frac{9}{10}C^2\epsilon \Delta x^2\left(\frac{1}{\epsilon - \alpha C \Delta x}\right)
          + 6\alpha  \Delta x C(\alpha C \Delta x - \epsilon)
          - 3C \Delta x^2.
        \end{equation}
        Thus if
        \begin{equation}
          \label{eq:coe_6_6}
          0 < C < \min\left(\frac{\epsilon}{\alpha \Delta x},\frac{10 \epsilon}{3 \epsilon + 10 \Delta x \alpha}
          \right),
        \end{equation}
        it holds that $\beta_6 \leqslant 0$.  Similarly, we can verify
        that the constrain $\beta_i^2 > 0, i = 1, \cdots 5$ will not
        affect the condition \eqref{eq:coe_6_6}, then the proof is
        finished.

        For $\epsilon < \Delta x$, it is always true that
        $\alpha = \exp(-1/\epsilon^2)$ is quite small, and
        \eqref{eq:coe_6_6} could be reduced into
        \begin{equation}
      \label{eq:coe_6_7}
      0 < C < \frac{10}{3}. 
    \end{equation}
      \end{enumerate}
    \end{proof}
}
    
\subsection{Analysis of the higher-order scheme}
\label{app:lossorder_proof}
From the test of the AP property for the numerical scheme, we found
that even for the IMEX3 scheme with WENO reconstruction, the
convergence order is only two. Analysis of the numerical scheme shows
that when solving $T^{n+1}$, the fourth-order polynomial equation of
$T^{n+1}$ is solved, where $(T^{n+1})^{4}$ is approximated as
\begin{equation}
 (T^{4})_{i} \approx (T_{i})^{4}
\end{equation}
instead of 
\begin{equation}
	(T)^{4}_{i} \approx \frac{\int_{x_{i-\frac{1}{2}}}^{x_{i + \frac{1}{2}}}T^{4}dx}{\Delta x},
\end{equation}
where $T_{i}$ is the cell average of cell $i$. Noting that 
\begin{equation}
	\frac{\int_{ x_{i - \frac{1}{2} }}^{ x_{i + \frac{1}{2} }} T(x) dx }{\Delta x} = T(x_{i}) + \frac{1}{24}(T(\xi_{i}))^{''}\Delta x^2,\quad \xi \in [x_{i - \frac{1}{2}}, x_{i + \frac{1}{2}}],
\end{equation}
and 
\begin{equation}
	\frac{	\int_{x_{i - \frac{1}{2}}}^{x_{i + \frac{1}{2}}} T^{4}(x) dx }{\Delta x} = T^{4}(x_{i}) + \frac{1}{24}(T^{4}(\eta_{i}))^{''}\Delta x^2,\eta_{i}\in[x_{i - \frac{1}{2}}, x_{i + \frac{1}{2}}],
\end{equation}
thus, it holds 
\begin{equation}
		\frac{\int_{x_{i - \frac{1}{2}}}^{x_{i + \frac{1}{2}}} T^{4} dx}{\Delta x} - (T_{i})^{4} = \mathcal{O}(\Delta x^{2}).
	\end{equation}
Therefore, the convergence order of the whole numerical scheme is at most two.

\bibliographystyle{plain}
\bibliography{reference}
\end{document}